\newcommand{\algorithmiccontinue}{\textbf{continue}}
\newcommand{\CONTINUE}{\STATE \algorithmiccontinue}
\newtheorem{definition}{Definition}[section]
\newtheorem{theorem}{Theorem}[section]
\newtheorem{remark}{Remark}[section]
\newtheorem{corollary}{Corollary}[section]
\newtheorem{lemma}{Lemma}[section]
\journal{Discrete Mathematics and Theoretical Computer Science}
\begin{document}
	\begin{frontmatter}

\title{ Transformations of Rectangular Dualizable Graphs }

		\author{Vinod Kumar\corref{cor1}\fnref{label}}
		 \ead{vinodchahar04@gmail.com}
		 \author{	Krishnendra Shekhawat\fnref{label}}
		 \ead{krishnendra.shekhawat@pilani.bits-pilani.ac.in}
	 \fntext[label]{Department of Mathematics, BITS Pilani, Pilani Campus, Rajasthan-333031, India }
		 \cortext[cor1]{Corresponding Author}


\begin{abstract}
A  plane graph is said to be a {\it rectangular graph } 
if  each of its edges  can be oriented  horizontal or vertical,  its internal regions are four-sided and it has a rectangular enclosure.
If  dual  of a planar graph is a rectangular graph, then the graph  is said to be a rectangular dualizable graph (RDG).  
In this paper, we present adjacency transformations between RDGs and present polynomial time algorithms for their transformations.

An RDG $\mathcal{G}=(V, E)$ is called maximal RDG (MRDG) 
if there does not exist an RDG $\mathcal{G'}=(V, E')$ with $E' \supset E$. An RDG $\mathcal{G}=(V, E)$ is said to be an edge-reducible if there exists an RDG $\mathcal{G'}=(V, E')$ such that $E\supset E'$. If an RDG is not edge-reducible, it is said to be an edge-irreducible RDG.
 We  show that there always exists an MRDG for a given RDG.  We also show that an MRDG is edge-reducible and can always be transformed to a minimal one (an edge-irreducible RDG). 
\end{abstract}

\begin{keyword}
 planar graph \sep rectangular floorplan \sep rectangular dualizable graph \sep VLSI circuit 
	
\end{keyword} 
			
\end{frontmatter}

\section{Introduction} \label{sec1}
Due to advances in VLSI technology, interconnection optimization is of  the major concern \cite{Sham07}, i.e., it is sometimes required to improve the interconnections of an existing VLSI circuit.   The modification of interconnection  can be seen in two ways: interconnection among  modules and interconnection of  modules to the outside units. To deal with the first case, we need to increase/decrease the adjacency relations among the modules as much as possible and  the second requires to increase the length (the number of modules adjacent to the exterior)  of exterior of the layout. Architecturally, a legacy  rectangular floorplan (RFP) can be reconstructed to suit modern lifestyles by changing the adjacency  relations of its rooms/rectangles. In this paper, we address  the graph theoretic characterization  to deal with the issues of interconnections in floorplanning.
\par
For a better understanding of the further text, we first explain the geometric duality relation between planar graphs and rectangular floorplans (RFPs). 
A graph $\mathcal{H}$ is called dual graph of a plane graph $\mathcal{G}$  if there is one to one corresponding between the vertices of   $\mathcal{G}$ and the regions of $\mathcal{H}$ and whenever any two vertices of $\mathcal{G}$ are adjacent, the corresponding regions of $\mathcal{H}$ are adjacent. A plane graph $\mathcal{G}$ is called rectangular  graph if  each of its edges  can be oriented  horizontal or vertical,  its internal regions are four-sided and it has a rectangular enclosure.  A graph admitting a rectangular dual graph is said to be a rectangular dualizable graph (RDG). A rectangular floorplan (RFP) can be seen as a particular embedding of the dual of an RDG  and is defined as a partition  of a  rectangle $\mathcal{R}$ into $n$-rectangles $R_1, R_2, \dots, R_n$ such that no four of them meet at a point. Fig. \ref{fig:f1} demonstrates the rectangular dualization method, i.e., the geometric duality relationship of  planar graphs and rectangular floor-plans. Consider the graph in Fig. \ref{fig:f1}a, whose   extended graph is shown in Fig \ref{fig:f1}b. It is dualized in Fig. \ref{fig:f1}c where a region $R_i$ corresponds to a vertex $v_i$. Further, on orienting each side of all regions of the dualized graph, horizontally or vertically, we obtain an RFP as shown in Fig \ref{fig:f1}d. 
\par
The point where three or more rectangles of a given RFP meet is called a joint.  We know that an RFP has 3-joints and 4-joints only where 4-joints are regarded as a limiting case of 3-joints \cite{rinsma1988existence}. Hence, abiding by the common design practices, we restrict ourselves to 3-joints only. Therefore throughout the paper, we transform an RFP with 3-joints to another RFP with 3-joints. The dual graph of such an RFP is always plane triangulated graph. 
\begin{figure}[H]
	\centering
	\includegraphics[width=0.95\linewidth]{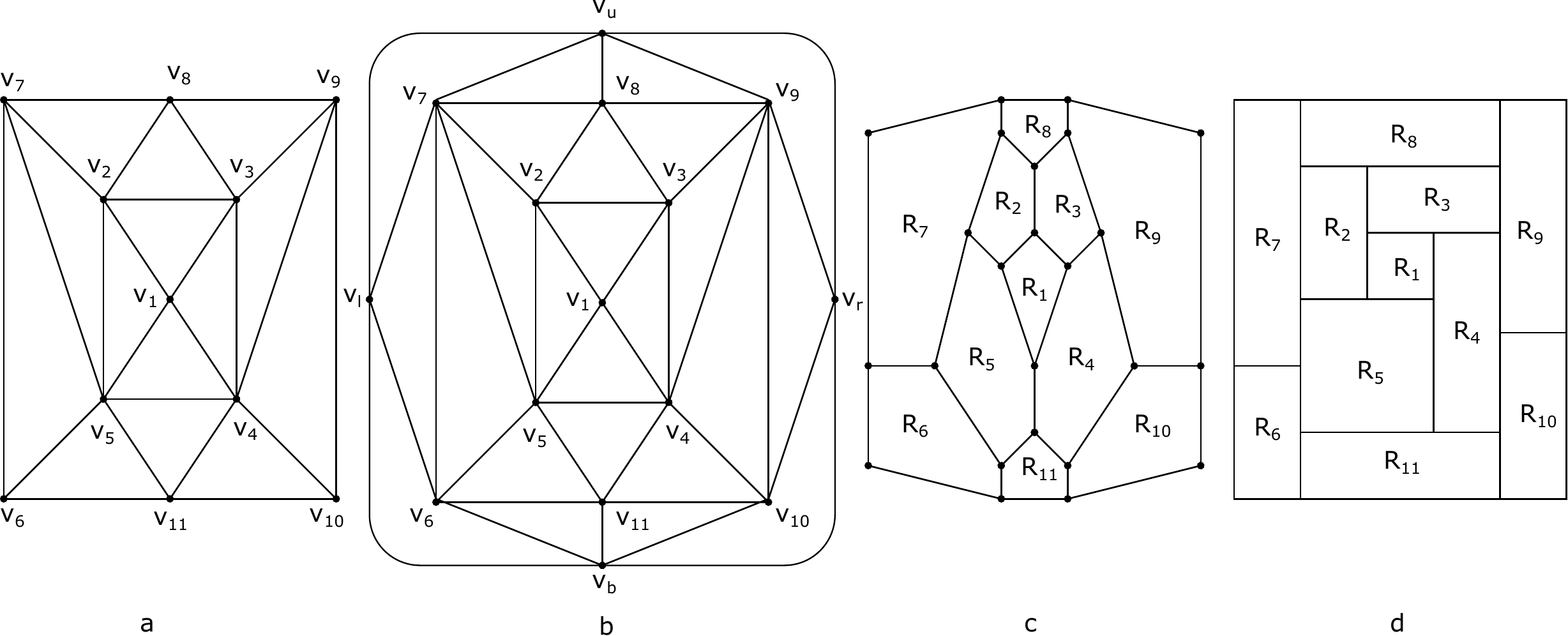}
	\caption{Rectangular dualization: (a) plane graph, (b) extended plane graph, (c)  rectangular dual graph and (d) rectangular floorplan }
	\label{fig:f1}
\end{figure}
\subsection{Related work}
The theory of RDGs is very restrictive \cite{Kozminski85,Bhasker87,Lai90}.  
 Constructive RFP algorithms \cite{Bhasker88,He93,Kant97,Eppstein12} based on this theory emphasizes on the packing of rectangles into a rectangular area.  Due to recent developments in the technology (VLSI design and architectural floorplanning), sometimes it is required to transform an existing RFP to  another RFP, if possible. Here, the idea is to recursively  improve an existing RFP until an optimal solution based on the user requirements, is attained.
 \par
 From graph theoretic context, RFP transformation techniques have not been much emphasized. It has  been addressed  by authors \cite{Lai88,Kozminski88,Tang90} where a topologically distinct RFP was obtained from an existing one for a given graph while preserving the adjacency relations of the existing one. Wang  \textit {et al.} \cite{wang2018customization} modified an existing RFP by inserting or removing a rectangle to it. Contrary to this,  in this paper, we address  RFP transformation technique from a graph theoretic context which is  based on changing  adjacency relations among rectangles. For any two given adjacent rectangles of an RFP,  it is interesting to identify whether they can be made nonadjacent  in such a way that  adjacency relations of  remaining rectangles of the RFP  remains  sustained and the resultant floor-plan is an RFP.  Conversely, can any two given non-adjacent rectangles of an RFP be  made adjacent in such a way that adjacency relations of  the remaining rectangles of the RFP  remains preserved and the resultant floor-plan is an RFP? 
 \par
 Enumerating of all RFPs composed of $n$-rectangles  has been remained a major issue in combinatorics  \cite{Yao03,Ackerman06,Reading12,Dawei14,Yamanaka17,Shekhawat18}.  These methods are not preferable because they produce  large solution space and therefore, it is computationally hard to   pick an RFP  suiting adjacency requirements of rectangles from such a large solution space.
\par 
In this paper, we present transformations between RFPs from graph theoretic context. However with the renewed interest in floorplannings, orthogonal floorplans from the graph theoretic context has also been well studied \cite{chang2015constrained,Alam13,Yeap93,He99,de2009rectilinear}.

\subsection{\rm{Our Results}}
Our main contribution is in two folds: (1). For a given  RDG, we aim to construct a new RDG by introducing new adjacency relations while preserving all the existing adjacency relations until no more adjacency relation can be added. (2). For a given  RDG, we aim to construct a new RDG by deleting adjacency relations while preserving all the other adjacency relations until no more adjacency relation can be removed. We first prove that these transformations are always possible and then present quadratic time algorithms for them. 

 The class of maximal RDGs (MRDGs) can play an important role in floorplanning because they  are rich in adjacency relations. Therefore, it is interesting to construct an MRDG of a given RDG.  Intuitively speaking, an MRDG is an RDG having maximal adjacency relations among its vertices. In fact, an MRDG with $n$ vertices has $2n-2$ or $3n-7$ edges. We show that  there always exists an MRDG for a given RDG.  Then we present a polynomial time algorithm that constructs the MRDG for the given RDG by adding new edges among its nonadjacent vertices. The number of such new edges is $2n-2-k$ or $3n-7-k$ where $k$ is the number of edges in the RDG. It would be more interesting if the given RDG is a path graph because then it requires the new edges to be added in bulk. Equivalently the new edges in bulk can be added to a Hamiltonian path of the given RDG to realize a new desired form of  the RDG. We also show that  a given MRDG is always edge-reducible and  can be reduced to an RDG and present an algorithm  that deletes the  edges of a given RDG in until it  is an RDG.
 
 In this way, we can just add or remove those edges of the RDG that are missing from one of the RDG and we are done.

 A brief description of the rest of the paper is as follows. In this article, we first survey the existing facts about RDGs in  Section \ref{sec2}. In Section \ref{sec3}, we introduce  MRDGs and edge-reducible RDGs. Then we show that an MRDG has $2n-2$ or $3n-7$ edges. MRDGs with  $2n-2$ edges are wheel graphs whereas MRDGs with  $3n-7$ edges are obtained from  the class of maximal plane graphs with the property that they do not have any separating triangles in their interiors by deleting one of their exterior edges.  In Section  \ref{sec4}, we prove  that it is always possible to construct an MRDG for a given RDG and present an algorithm for its construction from the given RDG. In Section \ref{sec5}, we show that it is always possible to transform an edge-reducible RDG to another RDG and present an algorithm for its reduction to a minimal one. Finally, we conclude our contributions and discuss future task in Section \ref{sec6}.

  \par	
  A list of  notations  used in this paper can be seen in  Table 1. 
  \begin{table}[H]
  	\centering
  	\begin{tabular}{|p{2cm}|p{9.6cm}|}
  		\hline
  		Symbol & Description \\
  		\hline
  		RFP & rectangular floorplan \\
  		\hline
  		RDG & rectangular dualizable graph \\
  		\hline
  		MRDG & maximal rectangular dualizable graph \\
  		\hline
  		$v_i$ &$i^{\text{th}}$ vertex of a graph  \\
  		\hline
  		$d(v_i)$ & degree of $v_i$   \\
  		\hline
  		$(v_i,v_j)$ & an edge incident to vertices $v_i$ and $v_j$\\
  		\hline
  		3-cycle & a cycle of length 3  \\
  		\hline
  		$v_iv_jv_k$ & a cycle passing through vertices $v_i$, $v_j$ and $v_k$ of an RDG\\
  		\hline
  		$R_i$ & a rectangle  corresponding $v_i$\\
  		\hline
  		$E_i$ & edge set of graph  $\mathcal{G}_i$ \\
  		\hline
  		$|E|$ & cardinality of a set $E$ \\
  		\hline
  		CIP(s) & corner implying path(s) \\
  		\hline
  	\end{tabular} 
  	\label{tab1}
  	\caption{\rm List of Notations}
  \end{table}

 \section{ Preliminaries} \label{sec2}
In this section, we survey some existing facts about RDGs which would be helpful in the coming sections.   

A planar graph  is a graph that can be  embedded in the plane without crossings. A plane graph is a planar graph with a fixed planar embedding. It partitions the plane
into connected regions called faces; the unbounded region is the exterior face (the outermost face) and all
other faces are interior faces. The vertices lying on the exterior face are exterior vertices and all other vertices are interior vertices. A planar (plane) graph is maximal if no new edge can be added to it without disturbing planarity. Thus each face of a maximal plane graph is a triangle  (including exterior face). If a connected graph has a cut vertex, then it is called a separable  graph, otherwise it is called a nonseparable  graph. 
Since floor-plans are concerned with connectivity, we only consider nonseparable (biconnected) and separable connected graphs in this paper. 
\begin{definition} \label{def21}
{\rm A  graph is said to be {\it rectangular graph } if each of its edges can be oriented horizontally or vertically such that it encloses a rectangular area. If the dual graph of  a planar graph is a rectangular graph, then the graph  is said to be a {\it rectangular dualizable graph} (RDG). In other words, a planar graph is rectangular dualizable (RDG) if its dual can be realized as a rectangular floorplan (RFP). An RFP is a partition  of a  rectangle $\mathcal{R}$ into $n$ rectangles $R_1, R_2,\dots, R_n$ provided that no four of them meet at a point.}
	
\end{definition}

Theorems \ref{thm21} and \ref{thm22} talks about the existence of an RDG, where the following definitions are required.

\begin{definition} \label{def22}
	{\rm \cite{Bhasker88} A separating triangle is a cycle of length 3 in a plane graph  that encloses atleast one vertex inside as well as outside. It is also known as a complex  triangle.}
\end{definition} 
Consider the graph shown in Fig. \ref{fig:f2}b having a cycle $\mathcal{C}$ of length 3  passing through the  vertices $v_1$, $v_2$ and $v_9$, enclosing the vertices $v_4$ and $v_5$, and having many vertices  outside $\mathcal{C}$. Therefore $\mathcal{C}$ is a   separating triangle.

\begin{definition} \label{def23}
	{ \rm \cite{Bhasker88} The block neighborhood graph (BNG) of a planar graph $\mathcal{G}$ is a graph in which each component of $\mathcal{G}$ is represented by a vertex and there is an edge between two vertices of the BNG  if and only if the two  corresponding components have a vertex in common.}
\end{definition} 
\begin{definition}\label{def24}
	{\rm \cite{Kozminski88} A shortcut in a biconnected plane graph $\mathcal{G}$ is an edge  incident to two vertices on the outermost cycle  of $\mathcal{G}$ and it is not a part  of this cycle.  A corner implying path (CIP) in $\mathcal{G}$ is a  $v_1-v_k$ path on the outermost cycle of $\mathcal{G}$ such that it does not contain any vertices of a shortcut other than $v_1$ and $v_k$ and the shortcut $(v_1,v_k)$ is called a critical shortcut. A critical CIP in a biconnected component $H_k$ of a separable plane  graph $\mathcal{G}_1$ is a CIP of $H_k$ that does not contain cut vertices of $\mathcal{G}_1$ in its interior.}
\end{definition} 
For  a better understanding of Definition \ref{def24}, consider the graph shown in Fig. \ref{fig:f2}a. 
\begin{itemize}
    \item Edges $(v_1,v_3)$, $(v_6,v_8)$ and $(v_4,v_9)$ are shortcuts,
    \item $v_1v_2v_3$ and $v_6v_7v_8$ are CIPs,
    \item $v_9v_1v_2v_3v_4$ is not a CIP because it contains the endpoints of the shortcut $(v_1,v_3)$ and hence $(v_9,v_4)$ is not a critical shortcut (CIPs may have the same endpoints, but they are edge disjoint).
\end{itemize}

\begin{figure}[H]
	\centering
	\includegraphics[width=0.80\linewidth]{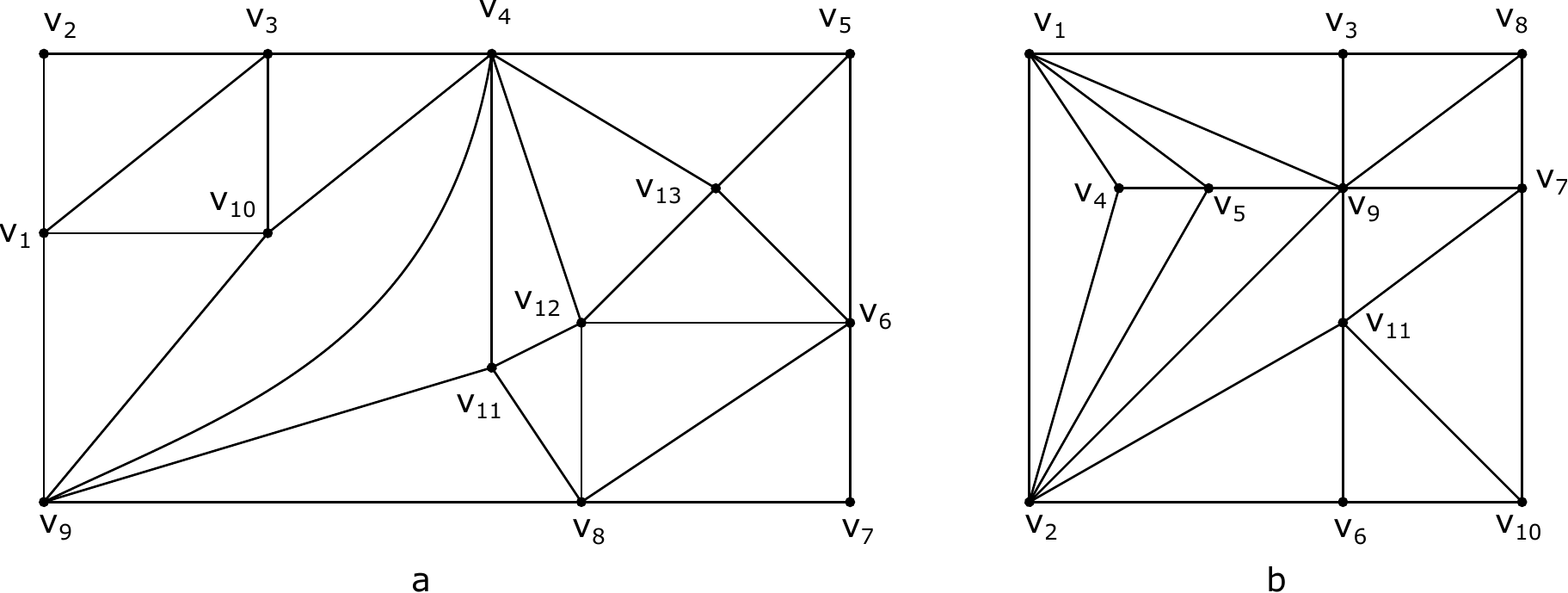}
	\caption{(a) Edges $(v_1,v_3)$, $(v_6,v_8)$ and $(v_4,v_9)$ are shortcuts. $v_1v_2v_3$ and $v_6v_7v_8$ are CIPs and (b) $\triangle v_1v_2v_9$ is a separating triangle }
	\label{fig:f2}
\end{figure}

\begin{theorem} \label{thm21}
	{\rm \cite[Theorem 3]{Kozminski85}  A nonseparable  plane graph $\mathcal{G}$ with triangular interior faces (regions) is an RDG if and  only if  it has at most 4 CIPs and has no separating triangle.}
\end{theorem}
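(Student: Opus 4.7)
The plan is to prove both directions of this equivalence separately, relying on the geometric interpretation of the rectangular dual on the forward side and on an extension-plus-construction argument on the backward side.

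For the forward direction, I would assume that $\mathcal{G}$ admits a rectangular dual (an RFP) and derive both structural conditions geometrically. First, if $\mathcal{G}$ contained a separating triangle $v_iv_jv_k$, then the three pairwise-adjacent rectangles $R_i,R_j,R_k$ would share a common point in the RFP (in fact they would have to enclose a proper subregion and exclude another), so one of the two sides of the triangle would have to correspond to rectangles packed inside a region bounded on all sides by $R_i\cup R_j\cup R_k$. A straightforward case analysis on which of the rectangles touch the outer boundary $\mathcal{R}$ then produces either a 4-joint or an orientation contradiction, ruling out separating triangles. For the CIP count, I would argue that every critical shortcut $(v_1,v_k)$ forces the CIP it bounds to be realised as a sequence of rectangles meeting a single corner of the enclosing rectangle $\mathcal{R}$; since $\mathcal{R}$ has only four corners, at most four CIPs can exist.

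For the backward direction, the plan is to reduce the problem to the known construction algorithm for properly triangulated planar graphs (PTPG) with four outer vertices. Given the hypotheses, I would add up to four extra vertices $N,S,E,W$ (corresponding to the four sides of $\mathcal{R}$), joining each to the consecutive boundary vertices of $\mathcal{G}$ in such a way that the endpoints of each CIP are split between two adjacent sides. The absence of separating triangles in $\mathcal{G}$, together with the bound of four CIPs, guarantees that this extended graph $\mathcal{G}^{+}$ is a maximal plane graph on $n+4$ vertices whose only separating triangles, if any, are exactly the four triangles formed at the corners, which are harmless. I would then invoke the standard result that such a corner-labelled maximal planar graph admits a regular edge labelling (equivalently a transversal structure), from which the RFP can be constructed explicitly by assigning horizontal/vertical orientations and solving the resulting system of linear inequalities for the coordinates of the rectangles.

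The main obstacle is the backward direction: specifically, choosing the correct assignment of $N,S,E,W$ so that every critical shortcut becomes an internal edge of $\mathcal{G}^{+}$ while no new separating triangle is created. This is delicate because CIPs may share endpoints, and a careless placement of the outer vertices can either leave a CIP straddling one corner (which then contradicts the ``no 4-joint'' assumption on the floorplan) or introduce a separating triangle at the boundary. I would handle this by treating the cases of 1, 2, 3, and 4 CIPs separately and, in the case of fewer than four CIPs, distributing the remaining outer vertices along edges of the outer cycle using the shortcut-free portions to avoid conflicts. Once $\mathcal{G}^{+}$ is shown to be a valid PTPG, the existence of the RFP follows from the cited constructive algorithms, completing the proof.
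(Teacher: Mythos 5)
The paper never proves this statement: Theorem~\ref{thm21} is imported verbatim from Kozminski and Kinnen \cite{Kozminski85} and used as a black box, so there is no internal proof to measure you against. What you have written is, in outline, the standard published proof of that cited result: geometric necessity of the two conditions on the one hand, and on the other the extension of $\mathcal{G}$ by four corner vertices $N,S,E,W$ to a properly triangulated plane graph followed by an explicit construction (Kozminski--Kinnen's algorithm, or equivalently a regular edge labelling in the sense of Kant and He). So the route is correct and is the canonical one; it is simply not a proof the paper itself supplies.

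Two points in your sketch remain promissory rather than proved. First, in the necessity of ``no separating triangle,'' the three rectangles $R_i,R_j,R_k$ need not share a common point; the correct argument is that the rectangles corresponding to the vertices inside and on the triangle tile a rectilinear region whose boundary is covered by only the three convex rectangles $R_i,R_j,R_k$, and since that region has at least four convex corners, each realised as a meeting point of two of the three surrounding rectangles, the pigeonhole over the three pairs forces two such corners from the same pair and hence a non-rectangular piece or a 4-joint. Second, and more substantially, the sufficiency direction hinges on the claim that the four corner vertices can always be placed so that every CIP contains a corner in its interior --- equivalently, that no shortcut ends up with both endpoints adjacent to the same new outer vertex, which would create a separating triangle in $\mathcal{G}^{+}$. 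This requires the observation that distinct CIPs have internally disjoint vertex sets on the outer cycle, so at most four of them exist and four division points in cyclic order can be threaded through all of them; you correctly identify this as the main obstacle, but it is exactly where the work of the theorem lies, and the case analysis you defer is the proof. As a plan the proposal is sound; as written it is a roadmap to the literature's argument rather than a self-contained one.
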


The graph shown in Fig. \ref{fig:f2}a is an RDG while the graph in Fig. \ref{fig:f2}b is not an RDG because of the presence of a separating triangle.

\begin{theorem} \label{thm22}
{\rm\cite[Theorem 5]{Kozminski85} A separable connected plane graph $\mathcal{G}$ with triangular interior  faces (regions)  is an RDG if and only if
\begin{enumerate}
\item[i.] $\mathcal{G}$ has no separating triangle,
\item[ii.] BNG of $\mathcal{G}$ is a path,
\item[iii.]  each maximal block\footnote{A maximal block of a graph $\mathcal{G}$ is a maximal biconnected subgraph of $\mathcal{G}$.} corresponding to the endpoints of	the BNG contains at most 2 critical CIPs, 
\item[iv.] no other maximal block contains a critical CIP.	
	\end{enumerate}}
\end{theorem}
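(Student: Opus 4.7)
The plan is to prove both implications separately, relying crucially on Theorem \ref{thm21} to handle each biconnected component. For the forward direction (necessity), I would assume $\mathcal{G}$ admits an RFP and argue that each of the four conditions is forced by the geometric constraints of rectangular partitions. For the reverse direction (sufficiency), I would induct on the number of maximal blocks, constructing an compatible RFP block-by-block and gluing the pieces at the rectangles corresponding to cut vertices.

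For necessity: condition (i) follows because a separating triangle in $\mathcal{G}$ would force three mutually adjacent rectangles to enclose other rectangles while simultaneously having rectangles outside, which is incompatible with the 4-sided interior property of an RFP (the same obstruction as in the nonseparable setting). For (ii), observe that a cut vertex $v$ corresponds to a single rectangle $R_v$ that is shared between the blocks containing $v$; since only two blocks can meet along a common rectangle without producing a 4-joint or a non-rectangular region, the blocks containing $v$ must appear in linear succession, and so the BNG must be a path. For (iii) and (iv), I would count how many of the four potential corners of each block's rectangular region in the merged RFP are consumed by cut-vertex rectangles: an interior block has a cut vertex on each of two opposite sides, fixing all four corners and leaving no room for an additional critical CIP, while an endpoint block has a cut vertex on only one side, leaving at most two corners free and hence allowing at most two critical CIPs.

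For sufficiency: I induct on the number $k$ of maximal blocks. The base case $k=1$ is nonseparable, where conditions (iii) and (iv) degenerate to the 4-CIP bound and Theorem \ref{thm21} applies directly. For the inductive step, I take an endpoint block $H$ of the BNG path, let $v$ be its unique cut vertex, and use condition (iii) together with Theorem \ref{thm21} to realize $H$ as an RFP in which $R_v$ can be placed as a boundary rectangle on a prescribed side. The subgraph $\mathcal{G}'$ obtained by deleting the interior vertices of $H$ (retaining $v$) still satisfies conditions (i) to (iv) with one fewer block, so by induction $\mathcal{G}'$ admits an RFP; I then merge the two RFPs along $R_v$ by rescaling so that its shared side matches.

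The main obstacle will be this gluing step. I must verify that $R_v$ can always be realized as a boundary rectangle at a chosen location of each block's RFP, and that the merged floor-plan still satisfies the RFP axioms (in particular, the 3-joint restriction and the four-sidedness of every interior region). This requires a careful analysis of where Theorem \ref{thm21}'s construction places the CIPs as corners, together with showing that enough flexibility exists to reserve a boundary spot for each cut-vertex rectangle. The \emph{critical} qualifier on CIPs, which excludes CIPs containing cut vertices in their interior, is precisely designed to guarantee this flexibility, so the merge should be achievable consistently.
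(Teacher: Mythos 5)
This theorem is not proved in the paper at all: it is imported verbatim as \cite[Theorem 5]{Kozminski85} and used as a black-box preliminary, so there is no in-paper argument to compare yours against. Your proposal is essentially a reconstruction of the original Koźmiński--Kinnen block-decomposition proof, and its overall architecture (necessity by geometric obstruction and corner counting, sufficiency by induction on the number of blocks with gluing along cut-vertex rectangles) is the right one.

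Two places in your sketch still carry real weight. First, for necessity of condition (ii) you only argue that a single cut vertex cannot lie in three or more blocks; that alone does not make the BNG a path, since one block could contain three distinct cut vertices, each shared with a different neighbouring block, giving a branching tree. You need the further observation that each block occupies a rectangular sub-region of the RFP, that each cut-vertex rectangle must span an entire side of that sub-region (shared with the adjacent block's sub-region), and that a rectangle cannot be tiled by three or more rectangular sub-regions attached along full sides except in a linear strip --- an L-shaped union would violate the rectangular enclosure. Second, your gluing step silently assumes that Theorem \ref{thm21} lets you realize a block as an RFP in which a \emph{prescribed} exterior vertex (the cut vertex) spans a full side of the bounding rectangle. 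Theorem \ref{thm21} as stated is only an existence result; what you actually need is the stronger corner-assignment fact from the nonseparable construction, namely that the four corner rectangles can be chosen to be any four exterior vertices hitting every CIP, with the cut vertex deliberately excluded from the corners so that its rectangle can be stretched across one side. This is exactly why the hypothesis counts \emph{critical} CIPs (those avoiding cut vertices in their interior): an end block must reserve two corners for the side occupied by its cut-vertex rectangle, leaving only two for CIPs, and an interior block reserves all four. Without making that quantitative corner argument explicit, the inductive merge does not go through.
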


 \section{ Introduction to an MRDG and an edge-reducible RDG} \label{sec3}
In this section, we introduce an MRDG and  some  important properties of the MRDG. Further we also introduce an edge-reducible RDG.
  
  \begin{definition} \label{def31}
	{\rm An RDG $\mathcal{G}=(V, E)$ is called maximal RDG (MRDG) 
	 if there does not exist an RDG $\mathcal{G'}=(V, E')$ with $E' \supset E$.}
	 \end{definition} 
  
\begin{definition} \label{def32}
{\rm An RDG $\mathcal{G}=(V, E)$ is said to be  edge-reducible if there exists an RDG $\mathcal{G'}=(V, E')$ such that $E\supset E'$. If an RDG is not edge-reducible, it is said to be an edge-irreducible RDG.}
\end{definition} 

 For a better understanding of Definitions \ref{def31} and \ref{def32}, refer to Fig. \ref{fig:f3}. In Fig. \ref{fig:f3}a, the given RDG is an MRDG since adding a new edge to it, its exterior becomes triangular and hence one of the  exterior modules in its floorplan will be non-rectangular. The corresponding RFP of the MRDG is shown in  \ref{fig:f3}b where $R_i$ corresponds to $v_i$. In Fig. \ref{fig:f3}c, the given graph $\mathcal{G}$ is an  edge-irreducible biconnected RDG since after removing an edge from it, it no longer remains an RDG. In fact, on removing an exterior edge from $\mathcal{G}$, it  transforms to a separable connected graph where one of its blocks contains three critical CIPs. It is noted that we can not remove an interior edge from $\mathcal{G}$ since  the resultant graph must have triangular interior regions. This is because we only consider plane graphs with triangular interior regions.
 
 \begin{theorem}
{\rm The number of edges  in an MRDG is $2n-2$ or $3n-7$ where $n$ denotes the number of vertices in the MRDG.}
\end{theorem}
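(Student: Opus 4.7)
The plan is to apply Euler's formula to reduce the theorem to a statement about the outer-face length of an MRDG, and then show that only two lengths are admissible. I fix an embedding of the MRDG $\mathcal{G}$ in which all interior faces are triangular (the standing assumption of the paper). Let $k$ denote the length of the outer face cycle. Summing face degrees gives $2|E| = 3(f-1) + k$, and combining this with Euler's formula $n - |E| + f = 2$ to eliminate $f$ yields
\begin{equation*}
|E| = 3n - 3 - k.
\end{equation*}
The theorem therefore reduces to proving that $k \in \{4, n-1\}$ for every MRDG, since $k=4$ gives $3n-7$ and $k=n-1$ gives $2n-2$.

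Next I would establish the bound $k \geq 4$: the dual RFP has four corner rectangles, and each corner rectangle corresponds to a distinct outer vertex of $\mathcal{G}$, so the outer cycle contains at least four vertices. To rule out $k=n$ (an outerplanar RDG), I would invoke a re-embedding argument showing that one vertex of an outerplanar RDG can always be moved from the outer face to the interior, freeing space for an additional chord and contradicting maximality; this yields $k \leq n-1$.

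The heart of the argument is to exclude the intermediate range $5 \leq k \leq n-2$. For each boundary edge $v_i v_{i+1}$, let $w_i$ be the third vertex of the unique interior triangle on that edge. If all the $w_i$ coincide to a single vertex $w$, then $w$ is adjacent to every outer vertex and $\mathcal{G}$ is the wheel on $k+1=n$ vertices, forcing $k=n-1$. Otherwise some consecutive pair satisfies $w_{i-1} \neq w_i$, and I would argue that either the shortcut $(v_{i-1}, v_{i+1})$ or the interior edge $(w_{i-1}, w_i)$ can be inserted into $\mathcal{G}$ while preserving triangularity of interior faces, avoiding separating triangles, and keeping the number of CIPs at most four; by Theorem~\ref{thm21} the resulting graph is then still an RDG, contradicting maximality of $\mathcal{G}$.

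The main obstacle will be verifying that at least one of the two candidate insertions is always safe. The chord $(v_{i-1}, v_{i+1})$ can create a separating triangle when $v_{i-1}$ and $v_{i+1}$ share an interior common neighbor, and the swap to the diagonal $(w_{i-1}, w_i)$ requires carefully re-routing the local triangulation of the diamond $v_{i-1} w_{i-1} v_i w_i v_{i+1}$. The hypothesis $k \leq n-2$, which guarantees at least two interior vertices, is exactly what gives the flexibility to do one of the two moves locally; the delicate step is a case analysis showing that in every local configuration permitted by the RDG hypothesis, one of the two insertions avoids both a separating triangle and a fifth CIP.
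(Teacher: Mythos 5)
Your Euler-formula reduction is a genuinely different (and attractive) framing from the paper's: you derive $|E|=3n-3-k$ from $2|E|=3(f-1)+k$ and $n-|E|+f=2$, and reduce the edge count to the claim $k\in\{4,n-1\}$ about the outer cycle, whereas the paper argues directly by cases (a dominating vertex forces the wheel $W_n$ with $2n-2$ edges; otherwise the MRDG is exhibited as a maximal plane graph with no interior separating triangle minus one exterior edge, giving $3n-7$). Your reduction is correct provided the MRDG is biconnected, which does hold (a separable RDG can always be augmented, cf.\ Lemma~\ref{lem44}, so a maximal one cannot be separable), but you should say so, since the paper admits separable RDGs and for those the outer walk is not a simple cycle and the face-degree sum changes.

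The genuine gap is in the step you yourself flag as the heart of the argument, and it is not merely an unexecuted case analysis: one of your two rescue moves is not a legal move at all. In a plane graph all of whose interior faces are already triangles, two non-adjacent interior vertices $w_{i-1}$ and $w_i$ do not lie on a common face, so the edge $(w_{i-1},w_i)$ cannot be drawn in the interior without crossings, and it cannot be routed through the outer face since neither endpoint lies on the outer cycle. Any ``re-routing of the local triangulation of the diamond'' is therefore an edge \emph{flip} (delete one edge, add another), which leaves $|E|$ unchanged and hence cannot contradict maximality. Consequently the only admissible additions are chords of the outer cycle drawn in the outer face, and your entire exclusion of $5\le k\le n-2$ rests on showing that \emph{some} such chord $(v_{i-1},v_{i+1})$ avoids both a separating triangle and a fifth CIP --- which is exactly the unproven content (it is essentially the assertion of Lemma~\ref{lem45} that a non-wheel biconnected RDG always has adjacent exterior edges $(v_i,v_j)$, $(v_j,v_k)$ with $|N(v_i)\cap N(v_k)|=1$). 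The auxiliary steps $k\ge 4$ (a single rectangle can occupy two corners of the enclosing rectangle, so ``four corners'' does not immediately give four distinct outer vertices) and $k\ne n$ are likewise only sketched. To complete your route you should prove the chord-availability lemma directly and drop the $(w_{i-1},w_i)$ alternative; done that way, your argument would also subsume the paper's subsequent theorem that the outer cycle of an MRDG has $4$ or $n-1$ vertices.
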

\begin{proof}
 Let $\mathcal{M}$ be an MRDG with $n$ vertices. If $d(v_i)=n-1$ for some vertex $ v_i \in \mathcal{M}$, then clearly it is a wheel graph $W_n$\footnote{A wheel graph $W_n$ is a graph in which a single vertex is adjacent to $n-1$ vertices lying on a cycle.}. $W_n$ is independent of  separating triangle and CIP. By Theorem \ref{thm21}, it is an RDG. Note that adding a new edge to $W_n$ creates a separating triangle passing through its central vertex and two of its exterior vertices. This implies that  it is maximal RDG. Now by degree sum formula, the sum of degree of all vertices of a graph is twice of the number of its edges. This implies that $3(n-1)+(n-1)=2$(the number of edges in $W_n$) and hence  the number of edges in $W_n$ is $2n-2$.
 
 We have shown that $W_n$ is an MRDG with $2n-2$ edges and  in this case, the number of edges  in $\mathcal{M}$ is $2n-2$.
	   
Now suppose that $d(v_i)<n-1, ~\forall ~ v_i~ \in \mathcal{M}$. Consider a maximal plane  graph $\mathcal{G}$ with $n$ vertices such that there does not exist any separating triangle in its interior.  We claim that $\mathcal{M}=\mathcal{G}-(v_i,v_j)$ for some exterior edge $(v_i,v_j)$ of $\mathcal{G}$. In order to claim  this, we  need to show that  $\mathcal{G}-(v_i,v_j)$ is an MRDG with $3n-7$ edges.

By our assumption on $\mathcal{G}$, it is evident that $\mathcal{G}-(v_i,v_j)$  has no separating triangle. 
 Suppose that $\mathcal{G}-(v_i,v_j)$ has a CIP.  Then there is a shortcut $(v_s,v_t)$ in $\mathcal{G}-(v_i,v_j)$. This implies that $\mathcal{G}$ has a separating triangle $v_sv_tv_e$  in its interior where $v_e$  is  its exterior vertex.  This is a contradiction to our assumption that $\mathcal{G}$ has no separating triangle in its interior. Thus we see that $\mathcal{G}-(v_i,v_j)$ has no CIP. Thus By Theorem \ref{thm21}, it is an RDG.
 
 The number of edges in a maximal plane graph is $3n-6$. This implies that $\mathcal{G}-(v_i,v_j)$  has $3n-7$ edges. Note that adding a new edge to $\mathcal{G}-(v_i,v_j)$, creates a separating triangle in  $\mathcal{G}-(v_i,v_j)$ and hence it is an MRDG.

Thus we see that  $\mathcal{G}-(v_i,v_j)$ is an RDG with $3n-7$ edges and hence  $\mathcal{M}$ is an MRDG with $3n-7$ edges. \\

\end{proof}
\begin{theorem}
{\rm The number of  vertices on the outermost cycle of an MRDG with $n$ vertices is $4$ or $n-1$.}
\end{theorem}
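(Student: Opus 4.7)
The plan is to split along the dichotomy that was just established in the preceding theorem: an MRDG $\mathcal{M}$ on $n$ vertices has either $2n-2$ edges (and is a wheel $W_n$) or $3n-7$ edges (and is obtained from a maximal plane graph $\mathcal{G}$ without interior separating triangles by removing one exterior edge). In the two cases I will compute the length of the outer cycle directly.

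\emph{Case 1 (wheel case).} If $\mathcal{M}=W_n$, then by construction there is a central vertex $v_0$ adjacent to all other $n-1$ vertices, and the remaining vertices form a cycle which is precisely the outermost cycle of $\mathcal{M}$. Hence the outermost cycle has $n-1$ vertices, giving the second alternative in the claim.

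\emph{Case 2 (near-maximal plane case).} Write $\mathcal{M}=\mathcal{G}-(v_i,v_j)$ where $\mathcal{G}$ is a maximal plane graph on $n$ vertices with no separating triangle in its interior, and $(v_i,v_j)$ is an exterior edge. Since every face of $\mathcal{G}$, including the outer one, is a triangle, the exterior face of $\mathcal{G}$ is a triangle $v_iv_jv_k$. The edge $(v_i,v_j)$ is shared with exactly one interior triangular face of $\mathcal{G}$; call its third vertex $v_\ell$. The key sub-claim is that $v_\ell$ is an interior vertex of $\mathcal{G}$ distinct from $v_k$: if $v_\ell=v_k$ then $\mathcal{G}$ would be the single triangle $v_iv_jv_k$ (so $n=3$, but an MRDG with $n\ge 4$ is needed for this case since $3n-7\ge 2n-2$ only when $n\ge 5$, and the edge count dichotomy forces $v_\ell\ne v_k$). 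Upon deleting $(v_i,v_j)$, the outer triangular face of $\mathcal{G}$ merges with the interior triangle $v_iv_jv_\ell$ into a single face whose boundary walk is $v_i\,v_k\,v_j\,v_\ell\,v_i$. Because $\mathcal{G}$ is $3$-connected (a maximal plane graph on $\ge 4$ vertices with no separating triangle), $\mathcal{M}$ is still $2$-connected, so this boundary walk is a simple cycle of length exactly $4$.

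Combining the two cases, the outermost cycle of $\mathcal{M}$ has either $n-1$ vertices (Case 1) or $4$ vertices (Case 2), which is the desired conclusion. The main obstacle I anticipate is the sub-claim in Case 2 that the boundary walk of the merged face is truly a simple $4$-cycle: this needs both the biconnectedness of $\mathcal{M}$ (to ensure the outer boundary is a cycle rather than a more complicated walk) and the fact that $v_\ell\ne v_k$ (to rule out degeneracies); both follow from the structural assumptions on $\mathcal{G}$ inherited from the previous theorem, so no further surprises are expected.
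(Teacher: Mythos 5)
Your proposal is correct and follows essentially the same route as the paper: split on the dichotomy from the edge-count theorem, read off $n-1$ exterior vertices in the wheel case, and observe that deleting an exterior edge of a maximal plane graph (whose outer face is a triangle) yields a quadrilateral outer face. The paper simply asserts the second case as ``evident,'' whereas you supply the justification (the merged face is a simple $4$-cycle because $v_\ell\ne v_k$ and the graph stays $2$-connected), which is a welcome addition but not a different argument.
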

\begin{proof} 
Let $\mathcal{M}$ be an MRDG with $n$ vertices. If $\mathcal{M}$ is  a wheel graph $W_n$, then it is clear that it has $n-1$ vertices on its exterior. Otherwise it is obtained  from a maximal plane graph that has no separating triangle in its interior by deleting one of its exterior edges. But a maximal plane graph has 3 vertices on its exterior. Then it is evident that  $\mathcal{M}$ has 4 vertices on its exterior.
\end{proof}
 
 \begin{figure}[H]
	\centering
	\includegraphics[width=0.90\linewidth]{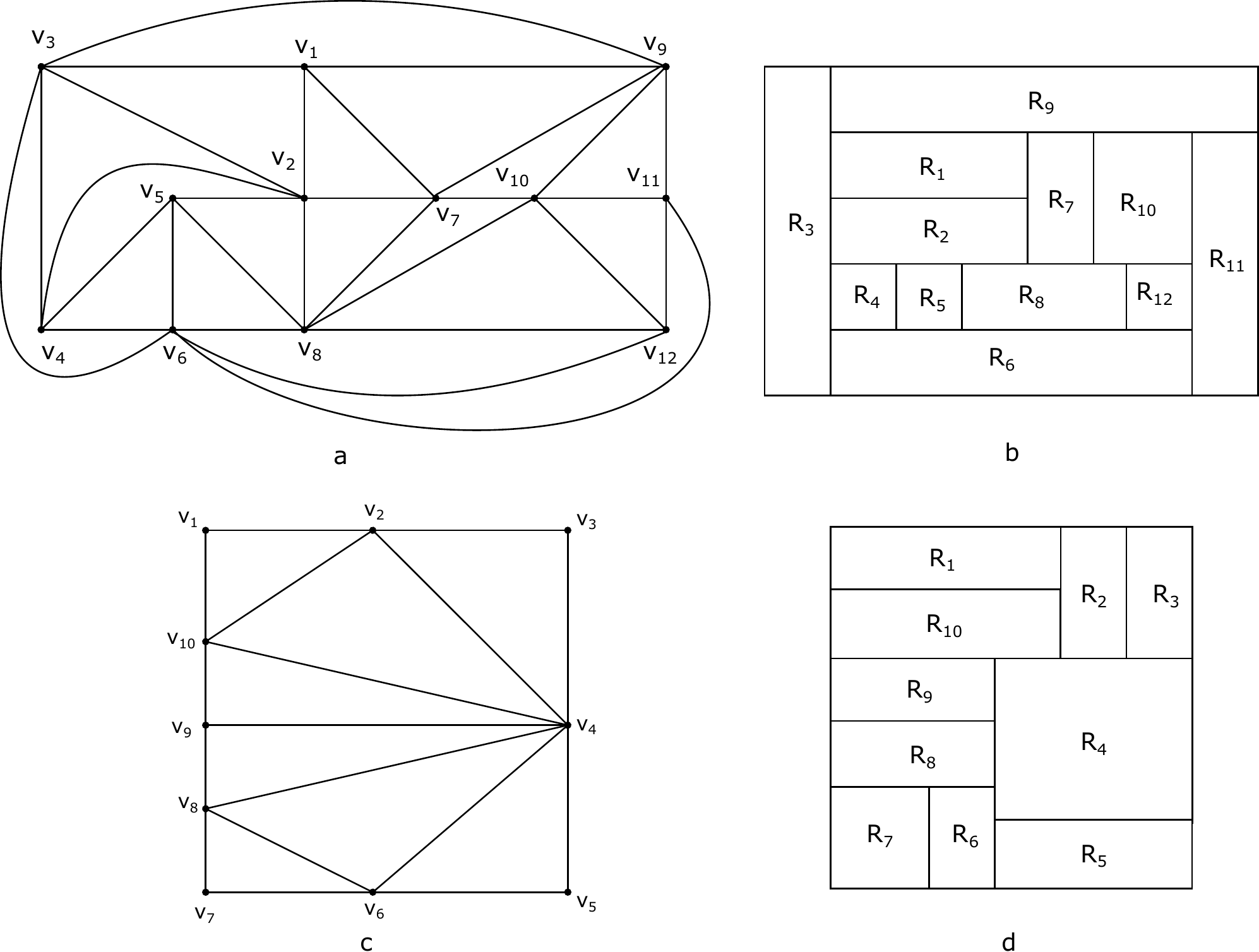}
	\caption{(a-b) An MRDG and its RFP, and (c-d) an edge-irreducible biconnected RDG and  its RFP.}
	\label{fig:f3}
\end{figure}

\section{MRDG Construction} \label{sec4}
  In this section,  we first prove that it is always possible to construct an MRDG $\mathcal{M} =(V, E)$ for a given RDG $\mathcal{G}=(V, E_1)$  such that $E_1\subset E$. Then we present an algorithm for its construction corresponding to $\mathcal{G}$. 
  
   In order to prove the main result, we first need to prove some lemmas. Denote $|N(v_i) \cap N(v_j)|$ by $s$ for any two adjacent  vertices $v_i$ and $v_j$ of an RDG $\mathcal{G}$.

\begin{lemma} \label{lem41}
{\rm If $s= 0$ , then  $(v_i,v_j)$ is an exterior edge of $\mathcal{G}$.}
\end{lemma}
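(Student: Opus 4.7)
The plan is to prove the contrapositive: if $(v_i,v_j)$ is an interior edge of $\mathcal{G}$, then $s \geq 1$ (in fact $s \geq 2$). Throughout Section \ref{sec2}, the paper only considers plane graphs whose interior faces are triangles, so I will exploit this structural assumption directly. The core observation is that an interior edge of a triangulated plane graph is shared by exactly two distinct interior triangular faces, and the third vertex of each such triangle is automatically adjacent to both $v_i$ and $v_j$.

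First I would fix a planar embedding of $\mathcal{G}$ (the one realised by the rectangular dualization) and suppose, for contradiction, that $(v_i,v_j)$ is not on the outermost cycle. Then $(v_i,v_j)$ belongs to the boundaries of two faces of $\mathcal{G}$, neither of which is the exterior face. By the standing hypothesis that every interior face is a triangle, these two faces are of the form $v_iv_jv_k$ and $v_iv_jv_l$ for some vertices $v_k,v_l\in V$. Next I would argue $v_k\ne v_l$: if $v_k=v_l$, the two triangles would share all three vertices, forcing a multi-edge or collapsing into a single face, contradicting that $\mathcal{G}$ is simple and plane with the chosen embedding.

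Since $v_k$ is adjacent to both $v_i$ and $v_j$ (as it lies on a triangular face containing both), and likewise for $v_l$, we have $v_k,v_l\in N(v_i)\cap N(v_j)$, so $s=|N(v_i)\cap N(v_j)|\geq 2>0$. This contradicts $s=0$, and hence $(v_i,v_j)$ must be an exterior edge, as claimed.

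There is no real obstacle here; the argument is entirely a consequence of the triangular-interior-face hypothesis stated in Section \ref{sec1} and reiterated in Theorems \ref{thm21} and \ref{thm22}. The only point requiring a sentence of care is the distinctness $v_k\ne v_l$, which I would justify by appealing to simplicity of $\mathcal{G}$ and to the fact that the two triangles lie on opposite sides of the edge $(v_i,v_j)$ in the fixed embedding.
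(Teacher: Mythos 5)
Your argument is correct in substance, but it takes a different route from the paper. The paper's proof is a one-line direct argument: if $s=0$ then $(v_i,v_j)$ lies on no triangle, so (since every non-bridge edge borders at least one interior face, which is triangular) it must be a cut-edge, and a bridge is necessarily an exterior edge. You instead prove the contrapositive by face-counting: an interior edge borders two interior faces, both triangles by the standing hypothesis, whose third vertices are distinct common neighbours of $v_i$ and $v_j$, so $s\geq 2$. Your version is in effect the forward direction of the paper's Lemma \ref{lem43} ($(v_i,v_j)$ interior $\Rightarrow s=2$, combined with Lemma \ref{lem42}), so it proves more than Lemma \ref{lem41} asks and makes the later corollary structure slightly redundant; the paper's bridge argument is leaner and isolates exactly the $s=0$ case. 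One small point you should patch: your claim that a non-exterior edge ``belongs to the boundaries of two faces'' fails when the edge is a bridge, which is incident to only one face. You need to observe (as the paper implicitly does) that under the triangular-interior-face hypothesis a bridge cannot be an interior edge, since its unique incident face has a boundary walk traversing the bridge twice and so cannot be a $3$-face; hence a bridge is automatically exterior and your two-face argument applies to all remaining interior edges. With that sentence added, your proof is complete.
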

\begin{proof}
 For $s=0$, $(v_i,v_j)$ is a cut-edge (bridge) and hence is an exterior edge.
 \end{proof}
 
 \begin{lemma} \label{lem42}
	{\rm For all adjacent  vertices $v_i$ and $v_j$, $s \leq 2$.}
\end{lemma}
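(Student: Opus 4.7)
The plan is to turn the question about common neighbours into a question about 3-cycles through the edge $(v_i,v_j)$ and then appeal to the planar embedding together with the RDG hypothesis.

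First, I would observe that each common neighbour $v_k\in N(v_i)\cap N(v_j)$ produces the 3-cycle $v_iv_jv_k$ containing the edge $(v_i,v_j)$, and, conversely, any 3-cycle through $(v_i,v_j)$ is of this form for a unique common neighbour. Hence $s$ equals the number of distinct 3-cycles of $\mathcal{G}$ passing through the edge $(v_i,v_j)$.

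Next, I would show that in an RDG every 3-cycle bounds a face. Fix the planar embedding of $\mathcal{G}$. A 3-cycle $C$ partitions the plane into two regions; if both regions contain a vertex of $\mathcal{G}$, then $C$ is a separating triangle by Definition \ref{def22}, contradicting the RDG hypothesis via Theorems \ref{thm21} and \ref{thm22}. Otherwise one of the regions contains no vertex, so $C$ is the boundary of a face. Finally, since in any plane graph each edge lies on the boundary of at most two faces, $(v_i,v_j)$ is contained in at most two triangular faces, hence in at most two 3-cycles, giving $s\leq 2$.

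The only mildly delicate point is handling the separable case in step two: because a 3-cycle is $2$-connected, any 3-cycle of $\mathcal{G}$ sits inside a single maximal block, and the absence of separating triangles in $\mathcal{G}$ prevents any 3-cycle in that block from being separating either. If $(v_i,v_j)$ happens to be a bridge, then it is incident to no triangular interior face and $s=0$ already, consistently with Lemma \ref{lem41}, so the inequality is trivial in that case.
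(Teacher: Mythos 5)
Your proof is correct and takes essentially the same route as the paper: both arguments rest on the observation that, in the absence of separating triangles, every 3-cycle through $(v_i,v_j)$ must bound a face of the plane embedding, and an edge lies on at most two faces (equivalently, has only two sides). The paper packages this as a pigeonhole contradiction starting from $s=3$ (two of the three triangles land on the same side of the edge, so one encloses a vertex of the other), whereas you state it as a direct face count; the substance is the same.
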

\begin{proof}
   To the contrary, suppose that  $s=3$. 
   Consider a plane embedding $\mathcal{G}_e$ of $\mathcal{G}$. Since, $s=3$, $N(v_i)$ and $N(v_j)$ must have 3 common vertices $v_k$, $v_l$ and $v_m$ which results in 3 cycles $v_iv_jv_k$, $v_iv_jv_l$ and $v_iv_jv_m$ in $\mathcal{G}_e$. Now $(v_i, v_j)$ is a common edge in these 3 cycles. This implies that  atleast 2 of 3 cycles would lie on the same side of $(v_i, v_j)$ in  $\mathcal{G}_e$. This means that  one of the cycles encloses some vertex $v_t$  of the other cycle and hence is not a face in $\mathcal{G}_e$. Therefore its removal  results $\mathcal{G}_e$ in a disconnected graph and hence it is a separating triangle in $\mathcal{G}$, which is a contradiction to Theorems \ref{thm21} and \ref{thm22} since $\mathcal{G}$ is an RDG. Similarly, if $s\geq 3$, we arrive at the contradiction. 
\end{proof}

\begin{lemma} \label{lem43}
{\rm $(v_i,v_j)$ is an interior edge of $\mathcal{G}$ if and only if  $s = 2$.}
\end{lemma}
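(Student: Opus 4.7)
The plan is to leverage the planar embedding of $\mathcal{G}$ together with the standing hypothesis that all interior faces of $\mathcal{G}$ are triangles, and the fact that an RDG admits no separating triangle (Theorems~\ref{thm21} and~\ref{thm22}).

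For the forward direction the argument is brief. If $(v_i,v_j)$ is interior, then in a plane embedding both faces adjacent to $(v_i,v_j)$ are interior faces of $\mathcal{G}$, and each is a triangle by hypothesis. Their third vertices give two distinct common neighbors of $v_i$ and $v_j$, so $s \geq 2$; combined with Lemma~\ref{lem42} this forces $s = 2$.

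For the reverse direction I plan to argue by contradiction. Suppose $s = 2$ with common neighbors $v_k$ and $v_l$, but $(v_i,v_j)$ lies on the outermost cycle. Then exactly one of the two faces incident to $(v_i,v_j)$ is an interior triangular face; call it $v_iv_jv_k$. The second $3$-cycle $v_iv_jv_l$ through this edge is therefore not a face, yet, being drawn in the plane embedding, it bounds a simple closed curve on the interior side of $(v_i,v_j)$ that encloses the face $v_iv_jv_k$. In particular $v_k$ lies strictly inside $v_iv_jv_l$. Using the fact that the outermost cycle of the RDG has length at least four (implicit in the RDG setup, since the rectangular dual has four distinct corner-rectangles), I would then exhibit an exterior vertex $v_p \notin \{v_i,v_j,v_l\}$ lying strictly outside the triangle $v_iv_jv_l$. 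Hence $v_iv_jv_l$ is a separating triangle of $\mathcal{G}$, contradicting Theorem~\ref{thm21} (or Theorem~\ref{thm22} in the separable case), so the assumption that $(v_i,v_j)$ is exterior must fail.

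The step I expect to be the most delicate is the topological verification of the two nestings: that $v_k$ lies strictly inside $v_iv_jv_l$ and that at least one other vertex of $\mathcal{G}$ lies strictly outside it. A short case analysis is warranted, splitting on whether $v_l$ is an interior vertex or also lies on the outermost cycle, and in the latter case on whether $(v_i,v_l)$ and $(v_j,v_l)$ are outer-cycle edges or shortcuts. In every subcase the length-at-least-four condition on the outer cycle supplies the outside vertex, and the fact that $v_iv_jv_k$ is the interior face immediately adjacent to $(v_i,v_j)$ traps $v_k$ inside $v_iv_jv_l$, closing the contradiction.
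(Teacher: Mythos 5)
Your argument follows essentially the same route as the paper's: the direction ``interior $\Rightarrow s=2$'' via the two triangular faces incident to the edge plus Lemma~\ref{lem42}, and the direction ``$s=2 \Rightarrow$ interior'' by contradiction, showing that if $(v_i,v_j)$ were exterior the two $3$-cycles $v_iv_jv_k$ and $v_iv_jv_l$ would lie on the same side of the edge so that one nests inside the other and yields a separating triangle, contradicting Theorem~\ref{thm21} (or Theorem~\ref{thm22}). You are in fact more careful than the paper on the one delicate point: Definition~\ref{def22} requires a vertex \emph{outside} the triangle as well as inside, a condition the paper's proof never checks; your only shaky step is the blanket claim that the outermost cycle of an RDG has length at least four, which is false as stated ($K_3$ and path graphs are RDGs), although it is needed only to exclude the degenerate configuration of a triangular outer boundary enclosing $v_k$ (a $K_4$-type graph), which is indeed not rectangularly dualizable.
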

\begin{proof}
First suppose that  $s = 2$. We need to show that $(v_i,v_j)$ is an interior edge in $\mathcal{G}$. To the contrary, suppose that  $(v_i,v_j)$ is an  exterior edge  of  $\mathcal{G}$. Let $N(v_i) \cap N(v_j)= \{v_k,v_l\}$. Since $(v_i,v_j)$ is an exterior edge, there exist two triangles $v_iv_jv_k$, $v_iv_jv_l$ in the plane embedding of  $\mathcal{G}$ such that both lie on the same side of $(v_i,v_j)$. This implies that one of them contains the other and hence is not a region (face) and is a separating triangle. This is a contradiction to Theorems \ref{thm21} and \ref{thm22} since $\mathcal{G}$ is an RDG.
\par
Conversely, suppose that $(v_i,v_j)$ is an interior edge in $\mathcal{G}$. Since $\mathcal{G}$ is an RDG, each of its interior regions is triangular. This implies that there exist two triangles $v_iv_jv_k$ and $v_iv_jv_l$ in the plane embedding of  $\mathcal{G}$. Hence  $N(v_i)$  and $N(v_j)$ have atleast two vertices in common, i.e., $s \geq 2$. By Lemma \ref{lem42}, we have $s \leq 2$. Hence $s =2$.
\end{proof}

\begin{corollary} \label{cor41}
{\rm  If $s= 1$, then  $(v_i,v_j)$ is an exterior edge of $\mathcal{G}$.}
\end{corollary}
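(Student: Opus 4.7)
The plan is to derive the corollary directly from Lemma \ref{lem43}, using its contrapositive. Lemma \ref{lem43} gives a biconditional characterization of interior edges in terms of $s$, namely that $(v_i,v_j)$ is an interior edge of $\mathcal{G}$ if and only if $s=2$. Since the hypothesis here is $s=1 \ne 2$, the reverse implication of Lemma \ref{lem43} immediately tells us that $(v_i,v_j)$ cannot be an interior edge of $\mathcal{G}$.

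Having excluded the possibility of $(v_i,v_j)$ being interior, I would next argue that the only remaining alternative for an edge of a plane graph is that it lies on the outermost face, i.e., it is an exterior edge. This step is trivial given the dichotomy between interior and exterior edges in a plane embedding, and no further case analysis is needed beyond citing Lemma \ref{lem43}. The combination with Lemma \ref{lem42} (which bounds $s \le 2$) is not even strictly required for the conclusion, since the $s=1$ hypothesis is given directly, but I would note in passing that Lemmas \ref{lem41}, \ref{lem42}, \ref{lem43} together fully classify adjacent vertex pairs by the value of $s \in \{0,1,2\}$, placing this corollary neatly in line with Lemma \ref{lem41}.

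There is no genuine obstacle here; the corollary is a one-line consequence of Lemma \ref{lem43}. The only presentation choice is whether to phrase it as a contrapositive of the forward direction ("interior $\Rightarrow s=2$, so $s \ne 2 \Rightarrow$ not interior $\Rightarrow$ exterior") or as the backward direction ("not $s=2$ $\Rightarrow$ not interior"). I would favor the former, as it emphasizes the direct exclusion, and then conclude in one sentence that since every edge of the plane graph is either interior or exterior, $(v_i,v_j)$ must be exterior.
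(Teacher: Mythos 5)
Your proposal is correct and matches the paper's proof, which simply declares the corollary a direct consequence of Lemmas \ref{lem41} and \ref{lem43}; your explicit use of the contrapositive of Lemma \ref{lem43} (interior $\Rightarrow s=2$, so $s=1$ forces the edge to be exterior) is exactly the intended one-line argument. You are also right that Lemma \ref{lem41} is not strictly needed for this case.
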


\begin{proof}
It is the direct consequence of Lemmas \ref{lem41} and \ref{lem43}.
\end{proof}

\begin{lemma} \label{lem44}
{\rm  It is always possible to construct a nonseparable (biconnected) RDG from  a separable connected   RDG by adding edges to it.}
\end{lemma}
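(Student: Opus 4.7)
The plan is to eliminate cut vertices one by one by adding a single ``bypass'' edge at each one. By Theorem~\ref{thm22}, the BNG of $\mathcal{G}$ is a path; label the maximal blocks $B_1, B_2, \dots, B_k$ in the order they appear along the BNG, and let $c_i$ be the cut vertex shared by $B_i$ and $B_{i+1}$. Fix a plane embedding of $\mathcal{G}$. Each $c_i$ lies on the outer boundary, and on the side of $c_i$ facing the exterior face there is a neighbor $u_i \in B_i$ and a neighbor $v_i \in B_{i+1}$ such that the path $u_i\text{-}c_i\text{-}v_i$ traces a portion of the outer boundary. Draw a new edge $(u_i, v_i)$ inside the exterior face, enclosing exactly $c_i$ between this edge and the path $u_i\text{-}c_i\text{-}v_i$, so that a new triangular face $u_ic_iv_i$ is created. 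Call the resulting plane graph $\mathcal{G}'$.

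Next I would verify that $\mathcal{G}'$ satisfies the hypotheses of Theorem~\ref{thm21}. The added edges are drawn in disjoint portions of the exterior face, so $\mathcal{G}'$ remains plane, and each new interior face is the triangle $u_ic_iv_i$, preserving the triangular-interior-region property. Each added edge creates a cycle through $c_i$, so $c_i$ ceases to be a cut vertex; applying this at every $c_i$ kills all cut vertices, making $\mathcal{G}'$ biconnected. For separating triangles: the only triangles in $\mathcal{G}'$ not already present in $\mathcal{G}$ are the bounding faces $u_ic_iv_i$, which enclose no vertex on either side, and $\mathcal{G}$ had no separating triangle by hypothesis.

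The final requirement is that $\mathcal{G}'$ have at most four CIPs. Here I would use conditions (iii) and (iv) of Theorem~\ref{thm22}: the endpoint blocks $B_1$ and $B_k$ together contribute at most $2 + 2 = 4$ critical CIPs, while every interior block contributes none. A critical CIP of $B_1$ or $B_k$ lies on a stretch of outer boundary that avoids cut vertices in its interior, so it remains a valid CIP in $\mathcal{G}'$ with the same shortcut. A CIP of an interior block, by (iv), must contain a cut vertex $c_j$ as an interior vertex; in $\mathcal{G}'$ this vertex has been moved off the outer cycle by the bypass edge $(u_j,v_j)$, so the path is no longer a subpath of the new outer cycle and cannot be a CIP. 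The bypass edges themselves are on the outer cycle of $\mathcal{G}'$, hence are not shortcuts. Applying Theorem~\ref{thm21} yields that $\mathcal{G}'$ is a biconnected RDG.

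The main obstacle is the CIP count. The intuition is clean---bypassing a cut vertex destroys every non-critical CIP that used it---but the bookkeeping is delicate: one must confirm that no shortcut spanning two blocks is introduced by an added edge (it cannot, since $(u_i,v_i)$ lies on the new outer cycle), that old interior shortcuts of a block whose CIP path crossed a cut vertex no longer define a CIP in $\mathcal{G}'$, and that surviving critical CIPs of $B_1$ and $B_k$ remain critical in the merged graph. Handling these cases carefully is the technical heart of the argument.
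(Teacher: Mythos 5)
Your overall strategy (add edges across each cut vertex to create new triangular faces and then invoke Theorem~\ref{thm21}) is the same as the paper's, but your CIP analysis rests on a false claim, and this is exactly the step you identify as the technical heart. A cut vertex $c_i$ shared by two blocks appears in \emph{two} corners of the exterior face: in the rotation at $c_i$ the exterior face is incident both between $u_i$ and $v_i$ and between the other boundary neighbor of $c_i$ in $B_{i+1}$ and the other boundary neighbor in $B_i$. Your single bypass edge $(u_i,v_i)$ covers only the first corner, so $c_i$ is \emph{not} ``moved off the outer cycle''; it survives on the new outer cycle via its second corner. Consequently the two edges $(u_i,c_i)$ and $(c_i,v_i)$ are no longer on the outer cycle but still join two outer vertices, i.e.\ they become \emph{new shortcuts}, which your proof never accounts for. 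These new shortcuts can create new CIPs: take two triangles $abc$ and $cde$ glued at $c$ and add the bypass edge $(b,d)$; the outer cycle becomes $a\,b\,d\,e\,c$, the edges $(b,c)$ and $(c,d)$ become shortcuts, and $b\,a\,c$ and $d\,e\,c$ are two brand-new CIPs. So your assertion that the only surviving CIPs are the at most $2+2$ critical CIPs of $B_1$ and $B_k$ is unjustified; a correct count must show that the up to $2(k-1)$ new shortcuts contribute at most one extra CIP at each end block and none in the middle (which requires a careful arc-by-arc argument, not the one you give). Likewise, your reason for killing non-critical CIPs of interior blocks is wrong for the same reason; the correct reason is that such a CIP traverses the edge $(u_j,c_j)$ or $(c_{j-1},v_{j-1})$, which is no longer a subpath of the outer cycle.

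The paper sidesteps this entirely by a two-stage construction. First it eliminates all bridges (adding, for a bridge $(v_i,v_j)$ and an adjacent exterior edge $(v_j,v_k)$ with $|N(v_i)\cap N(v_k)|=1$, the edge $(v_i,v_k)$), after which every remaining cut vertex $v_t$ has degree at least $4$, with at least two boundary neighbors in each incident block. It then adds a bypass edge in \emph{both} exterior corners at $v_t$ (the two red edges of Fig.~\ref{fig:f4}), which makes $v_t$ an interior vertex of the new graph. Since $v_t$ leaves the outer cycle, the edges incident to it cannot be shortcuts, no new shortcuts or CIPs arise, and the CIP count genuinely cannot increase. If you want to salvage your one-edge-per-cut-vertex version you must either add the second bypass edge as the paper does, or carry out the delicate bookkeeping for the new shortcuts $(u_i,c_i)$ and $(c_i,v_i)$ that your proposal currently omits.
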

\begin{proof}
Let $\mathcal{G}_1 =(V,E_1)$ be a separable connected RDG such that it has atleast one  bridge (cut-edge).  Suppose that  $L_1=\{ (v_i,v_j) \in E_1~|~~ |N(v_i) \cap N(v_j)|=0\}$ and $L_2=\{ (v_a,v_b) \in E_1~|~~ |N(v_a) \cap N(v_b)|=1\}$. Consider two  adjacent edges, $(v_i,v_j)$ from $L_1$ and  $(v_j,v_k)$ from $L_2$ such that $|N(v_i) \cap N(v_k)|=1$. Such selection is always possible since both edges belongs to different blocks and   $N(v_i) \cap N(v_k)=\{v_j\}$. 
\par
 Construct a graph  $\mathcal{G}_2=(V, E_2)$ where $E_2=E_1\cup \{(v_i,v_k)\}$. To prove $\mathcal{G}_2$ is an RDG, we prove the following:
 \begin{itemize}
     \item there does not exist  a separating triangle  passing through  $(v_i,v_k)$  in $\mathcal{G}_2$
     
     There would be a separating triangle passing through $(v_i,v_k)$ in $\mathcal{G}_2$ if $|N(v_i) \cap N(v_k)|=2$ in $\mathcal{G}_2$. In this case, 
     $v_j, v_r \in N(v_i) \cap N(v_k)$ such that $v_r$ lies inside the triangle passing through $(v_i,v_k)$.
     But $(v_i,v_j) \in L_1$ and $(v_j,v_k) \in L_2$. Therefore by Lemma \ref{lem41} and Corollary \ref{cor41}, both $(v_i,v_j)$ and $(v_j,v_k)$ are the exterior edges and $v_i$ and $v_k$ belongs to different blocks in  $\mathcal{G}_1$. Hence in $\mathcal{G}_2$,
      $|N(v_i) \cap N(v_k)|=1$ is the only possibility.  
     
     \item the number of critical CIPs in  $\mathcal{G}_2$ can not exceed the number of critical CIPs in $\mathcal{G}_1$ 
     
     In $\mathcal{G}_2$, a  critical CIP can only pass through $(v_i,v_k)$, which already  passes through $(v_i,v_j)$ and $(v_j,v_k)$ in $\mathcal{G}_1$. But $v_j$ is a cut vertex which is a contradiction to the fact that a critical CIP never passes through a cut vertex. 
 \end{itemize}

Since $\mathcal{G}_1$ is an RDG, each of its region is triangular. The new edge $(v_i,v_k)$ is added with the property that $|N(v_i) \cap N(v_k)|=1$. By Corollary \ref{lem41}, $(v_i,v_k)$ is exterior edge in $\mathcal{G}_2$.  Therefore, the new region  $v_iv_jv_k$ is triangular in $\mathcal{G}_2$. By Theorem \ref{thm22}, $\mathcal{G}_2$ is an RDG.  
\par
After adding $(v_i,v_k)$ to $\mathcal{G}_1$,  the edge $(v_i,v_j)$ from $L_1$ belongs to $L_2$ since  $|N(v_i) \cap N(v_j)|=1$ ($|N(v_i) \cap N(v_j)|=\{v_k\}$). Therefore a recursive process shows that at the iteration until $L_1$ is empty,  $\mathcal{G}_{k+1}=(V,E_{k+1})$ becomes a separable connected RDG with cut-vertices (vertex), but no cut edge where $E_{k+1}=E_k \cup (v_a,v_c)$ such that $(v_a,v_b)$ is from $L_1$ and  $(v_b,v_c)$ is from $L_2$ with the property $|N(v_a) \cap N(v_c)|=1$. In this way, we can construct a separable connected RDG having cut-vertices of  the given separable connected RDG only.
\par
It now remains to show that it is always possible to construct a nonseparable (biconnected) RDG  of  the given separable connected RDG $\mathcal{G}_1 =(V,E_1)$ having cut-vertices but no cut-edges. Let $v_t$ be its cut-vertex. Since it has no cut-edge, $d(v_t) \geq 4$. A plane embedding  of $\mathcal{G}_1$  with exterior cycles $C_1$ and $C_2$  sharing a cut vertex $v_t$ is  shown in Fig. \ref{fig:f4}a. It is evident from this embedding that there is no separating triangle passing though the new added edges (red edges) in the resultant graph  shown in Fig. \ref{fig:f4}b. Since $v_t$ is a cut-vertex,   none of the vertices $v_1$, $v_2$, $v_3$ and  $v_4$ in Fig. \ref{fig:f4}, which are adjacent to $v_t$, can  be the endpoints of a shortcut in  $\mathcal{G}_1$. This implies that the number of CIPs in the resultant graph (shown in Fig. \ref{fig:f4}b) can not exceed than the number of critical CIPs in $\mathcal{G}_1$. This proves the required result. 
\end{proof}

\begin{figure}[H]
	\centering
	\includegraphics[width=.95\linewidth]{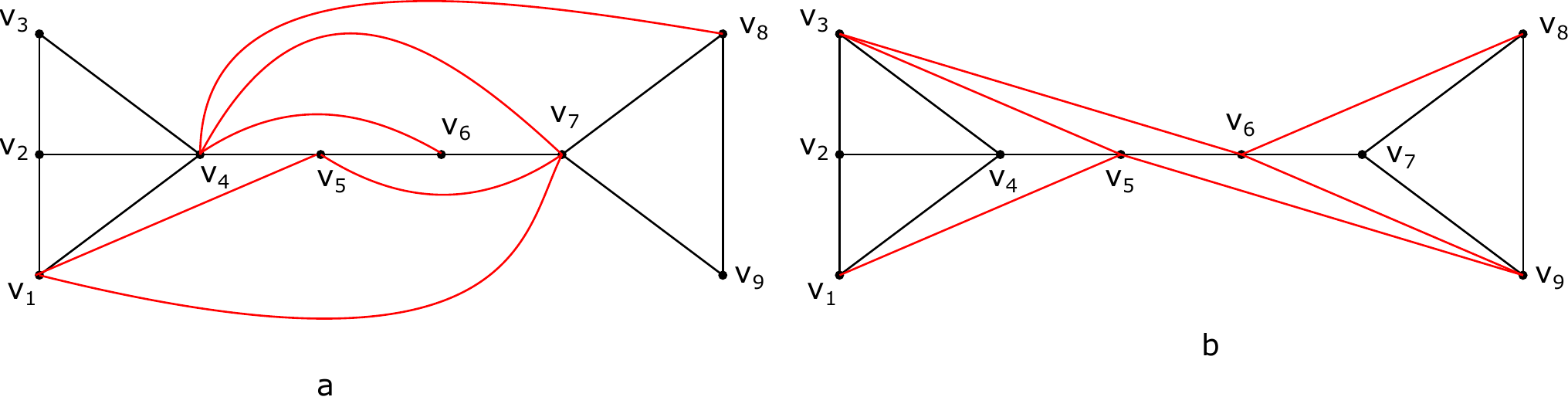}
	\caption{\rm (a) A random addition of new edges (red edges) to a separable connected RDG destroy the RDG property because of generating a separating triangle $v_4v_5v_7v$  in the resultant graph (b) while adding new edges (red edges) by Lemma \ref{lem44} do not destroy RDG property ( no separating triangle and any CIP in the resultant graph).}
	\label{fig:f8}
\end{figure}

 \begin{remark}
 {\rm It is not straight forward to add edges to a  separable connected RDG maintaining  RDG property.  Randomly adding new edges to an RDG can destroy the RDG property, i.e.,  can produce either a CIP or a separating triangle in the resultant graph.  In the lie of this, we have shown the  procedure of adding edges by Lemma \ref{lem44}. For instance, consider a separable connected RDG shown in Fig. \ref{fig:f8}a. It is transformed to a nonseparable graph by adding new edges (red edges) randomly. As a result, the nonseparable graph   thus obtained is not an RDG. In fact, it contains a separating triangle $v_4v_5v_7$.  On the other hand, red edges are added to the same graph by using Lemma \ref{lem44} in order to construct a nonseparable RDG shown in Fig. \ref{fig:f8}b. Thus Lemma \ref{lem44} is helpful in introducing pattern of new edges to be added to a  separable  connected  RDG to be a nonseparable RDG.}
 \end{remark}

\begin{figure}[H]
	\centering
	\includegraphics[width=.70\linewidth]{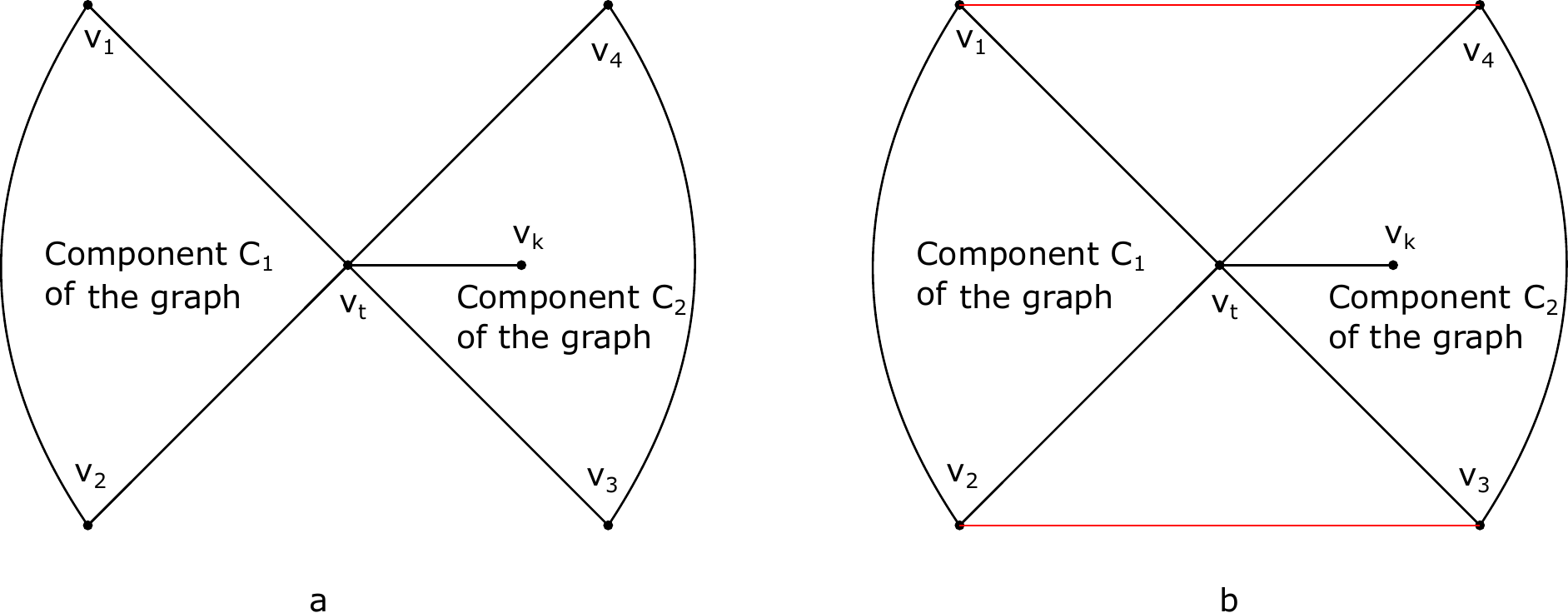}
	\caption{\rm Constructing a nonseparable RDG of  a separable connected RDG $\mathcal{G}_1$  with a cut-vertex $v_t$ shared by its two components $C_1$ and $C_2$.}
	\label{fig:f4}
\end{figure}

\begin{lemma} \label{lem45}
{\rm It is always possible to construct an MRDG from  a biconnected  RDG by adding edges to it.}
\end{lemma}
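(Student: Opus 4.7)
The plan is to iteratively add edges to the biconnected RDG $\mathcal{G}$ while preserving the RDG property (Theorem \ref{thm21}), stopping when no further edge can be added; by Definition \ref{def31}, the terminal graph is an MRDG. Since every addition strictly raises the edge count and the count is bounded above by $3n-7$ (per the preceding edge-count theorem in Section \ref{sec3}), the procedure terminates in finitely many steps.

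First I would examine the outer cycle $C = v_1 v_2 \cdots v_k$ of $\mathcal{G}$. If $k = 4$, then Euler's formula together with a triangulated interior forces $|E(\mathcal{G})| = 3n - 7$; any chord of the outer 4-cycle would introduce two common neighbors on one side of the new edge, contradicting Lemma \ref{lem43}, so $\mathcal{G}$ is already an MRDG. If $\mathcal{G}$ is the wheel $W_n$, the same is immediate from the preceding theorem. Otherwise $k \geq 5$, and I would locate three consecutive outer vertices $v_{p-1}, v_p, v_{p+1}$ on $C$ with $|N(v_{p-1}) \cap N(v_{p+1})| = 1$ (so that their only common neighbor is $v_p$), and add the chord $(v_{p-1}, v_{p+1})$, embedding so that the new triangular face is $v_{p-1} v_p v_{p+1}$ and the new outer cycle has length $k-1$. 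The addition leaves $|N(v_{p-1}) \cap N(v_{p+1})| = 1$ in the resulting graph $\mathcal{G}'$, so no separating triangle passes through the new edge; since $\mathcal{G}$ had no separating triangles to begin with, $\mathcal{G}'$ still has none. I would then verify that $\mathcal{G}'$ retains at most four CIPs, noting that the new chord is absorbed into the new outer cycle and so is not a shortcut, while previously existing shortcuts continue to induce the same CIPs.

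The main obstacle is showing that when $k \geq 5$ a consecutive triple with $|N(v_{p-1}) \cap N(v_{p+1})| = 1$ always exists. If every consecutive triple instead had $|N(v_{p-1}) \cap N(v_{p+1})| \geq 2$, then for each index $p$ an additional common neighbor $w_p \neq v_p$ would occur, and by Lemma \ref{lem42} it would be unique. When $w_p$ lies on $C$, the triangle $v_{p-1} v_{p+1} w_p$ together with the boundary path through $v_p$ forces a separating triangle already in $\mathcal{G}$; when $w_p$ is interior, aggregating these constraints around $C$ produces a nested family of overlapping triangles that likewise forces a separating triangle. Either case contradicts $\mathcal{G}$ being an RDG, so the desired triple must exist. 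Once found, the chord addition shortens $|C|$ by one, and iterating $|C|-4$ times yields an MRDG on $3n-7$ edges.
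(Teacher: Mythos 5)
Your overall strategy is the same as the paper's: repeatedly pick three consecutive outer vertices $v_{p-1},v_p,v_{p+1}$ whose only common neighbour (of $v_{p-1}$ and $v_{p+1}$) is $v_p$, add the chord $(v_{p-1},v_{p+1})$ so that $v_{p-1}v_pv_{p+1}$ becomes a new interior face, and shrink the outer cycle until it has length $4$. The verification that the new edge creates no separating triangle and no additional CIP matches the paper's argument. The problem is the step you yourself flag as the main obstacle: the existence of such a triple when $k\ge 5$.

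Your case analysis there does not work. First, the claim that a second common neighbour $w_p$ lying on $C$ ``forces a separating triangle already in $\mathcal{G}$'' is false. Take the hexagon $v_1v_2v_3v_4v_5v_6$ with chords $(v_1,v_5)$, $(v_3,v_5)$, $(v_2,v_5)$: every interior face is a triangle, there is no separating triangle and at most $4$ CIPs, so by Theorem \ref{thm21} this is an RDG; yet for the consecutive triple $v_1,v_2,v_3$ we have $N(v_1)\cap N(v_3)=\{v_2,v_5\}$ with $v_5$ on the outer cycle. (There is no triangle $v_{p-1}v_{p+1}w_p$ to speak of, since $(v_{p-1},v_{p+1})$ need not be an edge.) Second, in the case where all the $w_p$ are interior, the assertion that ``aggregating these constraints \dots forces a separating triangle'' is also false as stated: the wheel $W_n$ is exactly the configuration in which every triple has an interior second common neighbour, and it has no separating triangle. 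What actually has to be proved --- and what the paper proves --- is the dichotomy: if \emph{every} consecutive triple has a second common neighbour, then planarity forces all these second neighbours to coincide in a single hub $v_c$ adjacent to every outer vertex, whence either $\mathcal{G}$ is the wheel (excluded by hypothesis) or some further vertex lies inside a hub triangle $v_iv_jv_c$, which is then a separating triangle. Your argument never establishes that the $w_p$ coincide, never uses the global hypothesis that \emph{all} triples fail (each of your cases is a claim about a single triple, and both claims are false for a single triple), and does not treat the mixed case where some $w_p$ are on $C$ and others are interior. Until this existence step is repaired along the lines above, the induction has no guaranteed next move.
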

\begin{proof}
Let $\mathcal{G}_1 =(V,E_1)$ be a biconnected RDG. If $|E_1|=3|V|-7$ or $\mathcal{G}_1= W_n$, then $\mathcal{G}_1$ is itself an MRDG and the proof is obvious.  
\par
Suppose that $|E_1|<(3|V|-7)$ and $\mathcal{G}_1 \neq W_n$. Assume that $L_2=\{ (v_i,v_j) \in E_1 \mid |N(v_i) \cap N(v_j)|=1\}$. By Lemma \ref{lem41}, $L_2$ is a list of all exterior edges of $\mathcal{G}_1$.
\par
We now prove that  there exists atleast a pair of adjacent edges $(v_i,v_j)$ and  $(v_j,v_k)$ in $L_2$ such that $|N(v_i) \cap N(v_k)|=1$.  If such pair does not exist,  then $|N(v_a) \cap N(v_c)|=2$ for each pair  $(v_a,v_b)$ and $(v_b,v_c)$ in $L_2$. In fact, since $\mathcal{G}_1$ is a biconnected graph, by Lemma \ref{lem42}, we have $|N(v_a) \cap N(v_c)| \in \{1,2\}$. Let $v_1v_2 \dots v_p$ be the outermost cycle of $\mathcal{G}_1$. Note that all edges $(v_1, v_2)$, $(v_2, v_3)$ \dots $(v_{p-1}, v_p)$ and $(v_p, v_1)$ are exterior and hence by Lemma \ref{lem42}, all these edges belongs to  $L_2$. Now if we choose  $(v_1,v_2)$ and  $(v_2,v_3)$, then $|N(v_1) \cap N(v_3)|=\{v_2, v_c\}$. Again if we choose  $(v_2,v_3)$ and  $(v_3,v_4)$, then $|N(v_2) \cap N(v_4)|=\{v_3, v_c\}$. Continuing in this way, we see that all the exterior vertices are adjacent to $v_c$. Observe that the vertex $v_c$ and every   adjacent exterior vertices $v_i$ and $v_j$  forms a triangle. Therefore, if $\mathcal{G}_1$ has any other vertex (except $v_1, v_2, \dots, v_p$ and $v_c$ ), it would lie  inside the  triangle $v_iv_jv_c$, which is a separating triangle. This contradicts the fact that  $\mathcal{G}_1$ is an RDG. This implies that $\mathcal{G}_1$ cannot have any other vertex (except $v_1, v_2, \dots, v_p$ and $v_c$)
which concludes that $\mathcal{G}_1$ is a wheel graph $W_n$ which is again a contradiction since we assumed that $\mathcal{G}_1 \neq W_n$. This proves our claim. 
\par
Choose two  adjacent edges $(v_i,v_j)$ and  $(v_j,v_k)$ from $L_2$ such that $|N(v_i) \cap N(v_k)|=1$ and construct  a graph $\mathcal{G}_2=(V, E_2)$ where $E_2=E_1\cup \{(v_i,v_k)\}$.
\par
Now we show  that the number of CIPs in $\mathcal{G}_2$ can not exceed the number of CIPs in $\mathcal{G}_1$. For $\mathcal{G}_2$, there are the following possibilities:
\begin{enumerate}
	  \item[i.] None of vertices $v_i$ and $v_k$ is the endpoint of a shortcut in $\mathcal{G}_1$,
	 \item[ii.] One of vertices $v_i$ and $v_k$ is the endpoint of a shortcut  in $\mathcal{G}_1$,
	 \item[iii.] Both vertices $v_i$ and $v_k$ are the endpoints of a shortcut  in $\mathcal{G}_1$.
\end{enumerate} 
	These 3 possibilities are shown in Fig. \ref{fig:f5}a-\ref{fig:f5}c respectively. In the first case, clearly there is no CIP passing through  $(v_i,v_k)$ in $\mathcal{G}_2$.  In the second case, $v_iv_kv_{k+1}\dots v_{q-1}v_q$  becomes a CIP in $\mathcal{G}_2$ and $v_iv_jv_kv_{k+1}\dots v_{q-1}v_q$   no longer remains a CIP in $\mathcal{G}_2$. In fact, the edges $(v_i,v_j)$ and $(v_j,v_k)$ of the existing CIP in $\mathcal{G}_1$ are replaced by $(v_i,v_k)$ in  $\mathcal{G}_2$.  Thus, in this case, the number of CIPs do not get increased. The third case is not possible since $|N(v_i) \cap N(v_k)|=2$. In fact, $N(v_i) \cap N(v_k)=\{v_j, v_s\}$ and $\mathcal{G}_2$ is obtained from $\mathcal{G}_1$ by adding an edge $(v_i,v_k)$ such that  $|N(v_i) \cap N(v_k)|=1$. This proves our claim.
\par
Now we claim that  there does not exist  a separating triangle  passing through  $(v_i,v_k)$  in $\mathcal{G}_2$. Since $|N(v_i) \cap N(v_k)|=1$, i.e.,  $N(v_i) \cap N(v_k)=\{v_j\}$. Therefore $v_iv_jv_k$ is the only cycle of length 3 having no vertex inside and passing through $(v_i,v_k)$ in $\mathcal{G}_2$. This shows that $v_iv_jv_k$  is not a separating triangle, it is  a new added triangular region (face)  in $\mathcal{G}_2$. By Theorem \ref{thm21}, $\mathcal{G}_2$  is an RDG.   A recursive process  shows that each $\mathcal{G}_i=(V,E_i)$, $(i\geq 3)$ is an RDG where $E_i=E_{i-1} \cup \{(v_a,v_c)\}$ such that $|N(v_a) \cap N(v_c)|=1$ for some edges $(v_a,v_b), (v_b,v_c)$ belong to $L_2$ which is defined as $L_2=\{ (v_i,v_j) \in E_{i-1} \mid |N(v_i) \cap N(v_j)|=1\}$.
\par
It can be noted that the recursive process will terminate when the outermost cycle has four vertices for some RDG $\mathcal{G}_k$.  In fact,   $(v_i,v_j)$, $(v_j,v_k)$, $(v_k,v_l)$ and $(v_l,v_i)$ are four edges constituted by  the four exterior vertices $v_i$,  $v_j$,  $v_k$ and  $v_l$ of  some RDG $\mathcal{G}_k$. For any  two edges $(v_a,v_b)$ and $(v_b,v_c)$, we have  $|N(v_a) \cap N(v_c)|=2$. This terminate our process. On the other hand, there does not exist any other way for adding a new edge such that the resultant graph is an RDG with a new triangular region. Recall that a maximal plane graph has $3|V_1|-6$ edges where $V_1$ denotes its vertex set and has all triangular regions including exterior. In our case, every region of   $\mathcal{G}_k$ is triangular, but exterior is quadrangle. This implies that the number of edges in  $\mathcal{G}_k$ is  $3|V|-7$ and hence it is an MRDG. This completes the proof of lemma.
\end{proof}

 From Lemmas \ref{lem44}-\ref{lem45}, we conclude that the following main result of the paper.
\begin{theorem} \label{thm41}
{\rm  It is always possible to construct an MRDG of  a given RDG.}
\end{theorem}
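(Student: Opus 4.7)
The theorem follows by a straightforward case analysis built on Lemmas \ref{lem44} and \ref{lem45}, which between them have already done all the real work. My plan is to compose the two lemmas and verify that the composition makes sense, i.e.\ that the intermediate object produced by Lemma \ref{lem44} is indeed a valid input to Lemma \ref{lem45}.

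First, I would note that any connected RDG $\mathcal{G}=(V,E_1)$ falls into exactly one of two classes: biconnected (nonseparable) or separable connected, as discussed in Section \ref{sec2}. I would split on this dichotomy.

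In the biconnected case, Lemma \ref{lem45} applies to $\mathcal{G}$ directly and supplies an MRDG $\mathcal{M}=(V,E)$ with $E_1\subseteq E$; if $\mathcal{G}$ is already maximal (i.e.\ $|E_1|=3|V|-7$ or $\mathcal{G}=W_n$) then $\mathcal{M}=\mathcal{G}$. In the separable case, I would first invoke Lemma \ref{lem44} to augment $\mathcal{G}$ by a sequence of edges into a biconnected RDG $\mathcal{G}'=(V,E')$ with $E_1\subseteq E'$, and then feed $\mathcal{G}'$ into Lemma \ref{lem45} to reach an MRDG $\mathcal{M}=(V,E)$ with $E'\subseteq E$, hence $E_1\subseteq E$. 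In both cases $\mathcal{M}$ is an MRDG on the same vertex set $V$, which is what the theorem asserts.

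The only thing that really needs checking is that the output of Lemma \ref{lem44} is legitimately a biconnected RDG (so that Lemma \ref{lem45} is applicable) and that the inclusion of edge sets is transitive, both of which are immediate from the statements of the lemmas. There is no genuine obstacle here; the substantive content, namely the existence of a safe edge to add and the preservation of the no-separating-triangle and CIP conditions, is entirely absorbed into Lemmas \ref{lem41}--\ref{lem45}. So this theorem is essentially a corollary, and I would present it as a two-line chaining argument rather than reprove any of the structural claims.
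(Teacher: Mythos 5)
Your proposal is correct and matches the paper's own treatment: the paper likewise derives Theorem \ref{thm41} immediately by chaining Lemma \ref{lem44} (separable connected RDG to biconnected RDG) with Lemma \ref{lem45} (biconnected RDG to MRDG). Your explicit check that the output of Lemma \ref{lem44} is a valid input to Lemma \ref{lem45} is a nice touch, but the argument is the same two-line composition the paper uses.
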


\begin{algorithm}[H]
	\caption{\textbf{Constructing  an MRDG of a given RDG }}
	\begin{algorithmic}[1]
		\REQUIRE  An RDG $\mathcal{G}_1=(V,E_1)$ 
		\ENSURE An MRDG $\mathcal{M} =(V, E)$ for  $\mathcal{G}=(V, E_1)$  such that $|E_1|<|E|$   	
		
		\STATE $L_{1} \leftarrow \phi$	
		\STATE $L_{2} \leftarrow \phi$
		
		\FORALL {$(v_i,v_j)\in E_1$}
		\STATE $s \leftarrow |N(v_i) \cap N(v_j)|$
		\IF {$s==0$}
		\STATE $L_1 \leftarrow L_1 \cup \{(v_i,v_j)\}$
		\ELSIF {$s==1$}	
		\STATE $L_2 \leftarrow L_2 \cup \{(v_i,v_j)\}$
		\ELSE
		\CONTINUE
		\ENDIF
		\ENDFOR
		
		\FORALL {$(v_i,v_j) \in L_1$}
		\IF {$(v_j,v_k) \in L_2$}
		\STATE $L_2 \leftarrow (L_2 \cup \{(v_i,v_j), (v_i,v_k)\})-\{(v_j,v_k)\}$
		\STATE $ E_1\leftarrow E_1 \cup \{(v_i,v_k)\}$
		\ELSE
		\CONTINUE
		\ENDIF
		\ENDFOR
		
		\FORALL {$(v_i,v_j), (v_j,v_k) \in L_2$} 
		\IF {$ |N(v_i) \cap N(v_k)|==1$}
		\STATE $L_2 \leftarrow L_2 \cup \{(v_i,v_k)\}-\{(v_i,v_j), (v_j,v_k)\}$
		\STATE $ E_1\leftarrow E_1 \cup \{(v_i,v_k)\}$
		\ELSE
		\CONTINUE
		 \ENDIF
		\ENDFOR
		
		\FORALL {$(v_i,v_j) \in E_1$}
		\IF {$(v_i,v_j) \in (E_1-\{(v_i,v_j)\})$}
		\STATE $ E_1\leftarrow E_1 -\{(v_i,v_j)\}$
		\ELSE
		\CONTINUE
		\ENDIF
		\ENDFOR
		\RETURN $\mathcal{G}$
	\end{algorithmic} \label{algo41}
\end{algorithm}
Since the output of Algorithm \ref{algo41} is an MRDG having four vertices on its exterior, the corresponding RFP would have four rectangles on the exterior. It may not always be desirable to transform a given RDG to an  MRDG. In such a case, we can replace $L_2$ by $L_2-A$ where $A$ is the set of edges not to be added to the given RDG. Thus we can obtain the required RDG from a given RDG.

 For a better understanding to Algorithm  \ref{algo41}, we explain its steps through  an example. Consider an RDG $\mathcal{G}_1=(V,E_1)$ shown in Fig. \ref{fig:f6}a. First of all,  Algorithm \ref{algo41}  computes two sets $L_1$ and $L_2$  (the lines $3-12$) from $\mathcal{G}_2$ such that  $L_1$  contains those edges whose endpoints have no common  vertex and $L_2$ contains  those edges whose endpoints have exactly one common  vertex. Then  $L_1=\{(v_7,v_{10})\}$ and $L_2=\{(v_1,v_2)$, $(v_2,v_3),$ $(v_3,v_4),$ $(v_4,v_6),$ $(v_6,v_8),$ $(v_7,v_8),$ $(v_{10},v_{12}),$ $(v_{11},v_{12},(v_9,v_{11}),$ $(v_9,v_{10}),$ $(v_7,v_1) \}$.
 \par  
 \begin{figure}[H]
	\centering
	\includegraphics[width=.85\linewidth]{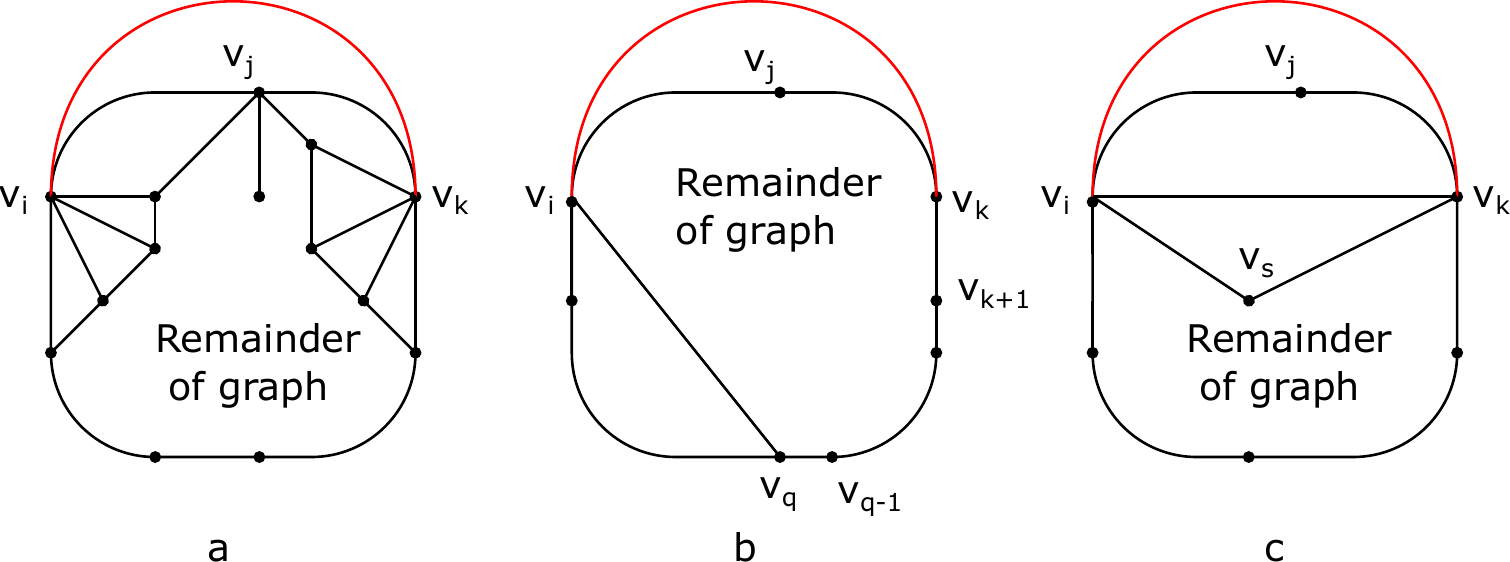}
	\caption{\rm  Three possible depictions of $\mathcal{G}_2$ obtained from $\mathcal{G}_1$ (consists of black edges) by adding a red edge.}
	\label{fig:f5}
\end{figure}
\begin{figure}[H]
	\centering
	\includegraphics[width=1.0\linewidth]{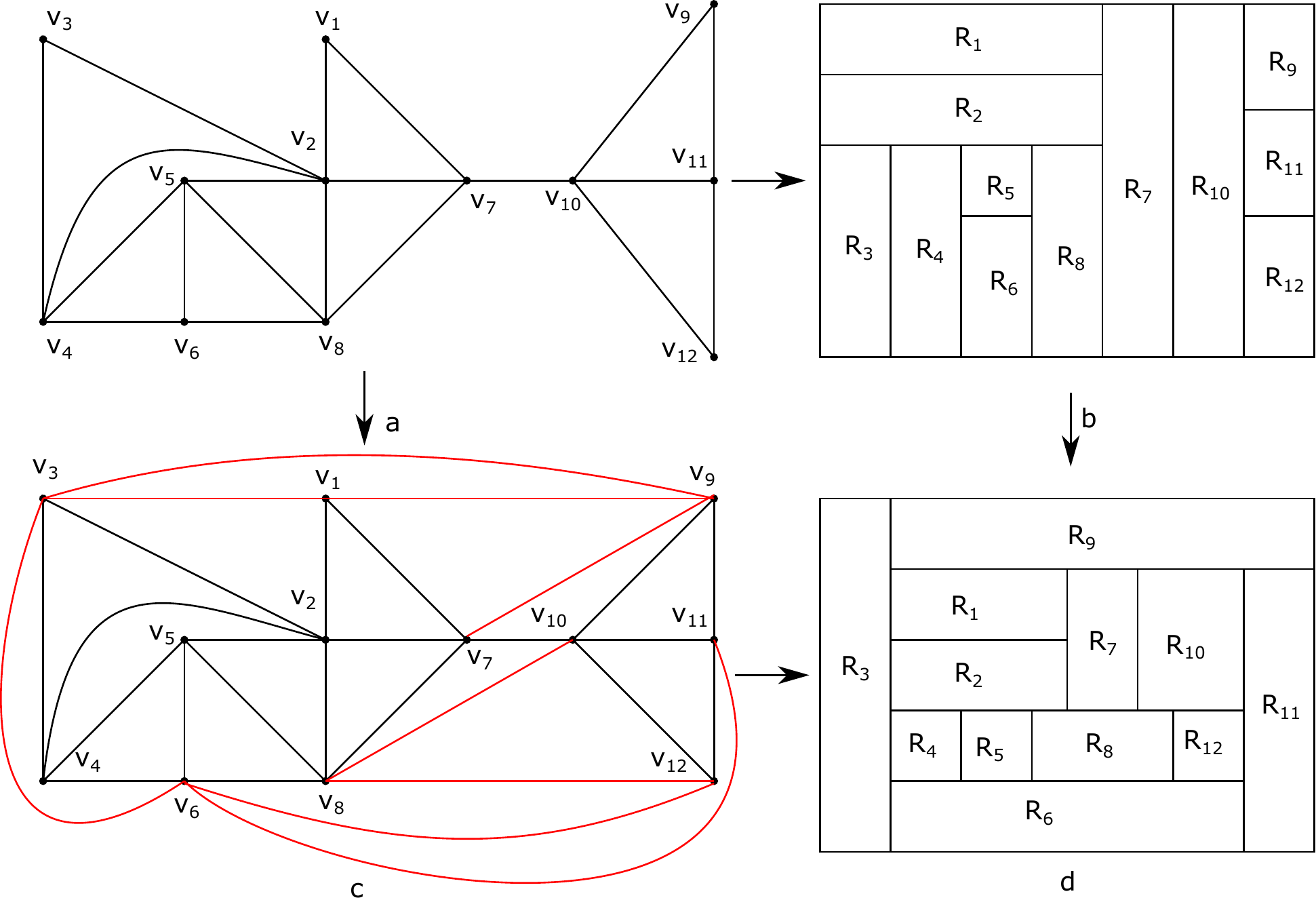}
	\caption{\rm  A given RDG $\mathcal{G}_1$ and (b) its  RFP, (c) the derivation of an MRDG $\mathcal{M}_2$  from $\mathcal{G}_1$, and (d) its RFP.}
	\label{fig:f6}
\end{figure}
  Now it executes the rest of its steps ( $13-20$)  as follows: 
  
  Since for $(v_7,v_{10}) \in L_1$, there is an edge  $(v_{10},v_9)$  belonging to $L_2$, the loop ($13-20$)  adds $(v_7,v_9)$ and $(v_7,v_{10})$ to  $L_2$,  and adds $(v_7,v_9)$ to $E_1$.  Further,  it subtracts $(v_{10},v_9)$ from $L_2$. Thus,  
 $L_2=\{(v_7,v_{10}),$ $(v_7,v_9),$ $(v_1,v_2)$, $(v_2,v_3),$ $(v_3,v_4),$ $(v_4,v_6),$ $(v_6,v_8),$ $(v_7,v_8),$ $(v_{10},v_{12}),$ $(v_{11},v_{12},(v_9,v_{11}),$ $(v_7,v_1)\}$ and  $E_1 =E_1 \cup \{(v_7,v_9)\}$.
 Since $L_1$  has exactly one edge, this loop terminates (In fact, we have transformed the given separable connected RDG to an nonseparable RDG. This method was proved by Lemma \ref{lem44})  and Algorithm \ref{algo41} executes the next loop ($21-28$) as follows:
 \par

 Suppose that Algorithm \ref{algo41} picks   $(v_9,v_7)$ and  $(v_7,v_1)$ from $L_2$.  Since $N(v_1)\cap N(v_9)=\{v_7\}$,  $|N(v_1)\cap N(v_9)|=1$. Then it subtracts $(v_9,v_7)$ and  $(v_7,v_1)$  from $L_2$ and  adds $(v_9,v_1)$  to both $L_2$ and $E_1$ ( the lines 23 and 24). Thus 
 $L_2=\{(v_9,v_1),$ $(v_7,v_{10}),$ $(v_1,v_2)$, $(v_2,v_3),$ $(v_3,v_4),$ $(v_4,v_6),$ $(v_6,v_8),$ $(v_7,v_8),$ $(v_{10},v_{12}),$ $(v_{11},v_{12},(v_9,v_{11})\}$ and $E_1 = E_1 \cup\{(v_9,v_1), (v_7,v_9)\}$. 
 \par
  Again it picks   $(v_{10},v_7)$ and  $(v_7,v_8)$ from $L_2$. Since $N(v_{10})\cap N(v_8)=\{v_7\}$,  $|N(v_{10})\cap N(v_8)|=1$, it subtracts $(v_{10},v_7)$ and  $(v_7,v_8)$ from $L_2$, and adds $(v_{10},v_8)$ to both $L_2$ and $E_1$ (the lines 23 and 24).Thus,
 $L_2=\{(v_{10},v_8),$ $(v_9,v_1),$ $(v_1,v_2)$, $(v_2,v_3),$ $(v_3,v_4),$ $(v_4,v_6),$ $(v_6,v_8),$ $(v_{10},v_{12}),$ $(v_{11},v_{12},(v_9,v_{11})\}$  and $E_1 = E_1\cup \{(v_{10},v_8),$ $(v_9,v_1), (v_7,v_9)\}$.
 \par
  Thus a recursive process of this loop adds  $\{(v_6,v_{11}),$ $(v_3,v_6),$ $(v_9,v_3),$ $(v_9,v_{11}) \}$  to $L_2$ and $E_1 =E_1 \cup  \{(v_9,v_3),$ $(v_6,v_{12}),$ $(v_8,v_{12}),$ $(v_3,v_6),$ $(v_9,v_3),$ $(v_8,v_{10}),$ $(v_9,v_1),$ $(v_3,v_1),$ $(v_9,v_7) \}$ respectively.
 \par
  Since there has not been added any duplicate edge (multiple edges), the loop ($30-35$) skips automatically.
  
Thus we see that the output  is an MRDG $\mathcal{M}_2$ shown in Fig. \ref{fig:f6}c where red edges are the new edges which are added to $\mathcal{G}_1$.  Note that $\mathcal{G}_1$ admits an RFP $\mathcal{F}_1$ shown in \ref{fig:f6}b and $\mathcal{M}_2$ admits an RFP $\mathcal{F}_2$ shown in \ref{fig:f6}d. Consequently to this, $\mathcal{F}_1$ can be  transformed to $\mathcal{F}_2$ (a maximal one).

\subsection*{Analysis of computational complexity}
 Let $v_s$ be a vertex of the largest degree in the input RDG $\mathcal{G}$.  This implies that $|N(v_i| \leq K$ where $d(v_s)=K$. Now we consider each of the following loops: 
	\begin{enumerate}
	\item[i.] The computational complexity of the lines  $3-12$ is $(|N(v_i)||N(v_j)||E_1|) \cong K^2 |E_1|\cong$  O$(n)$. 
	\item[ii.]  The computational complexity of  the lines $14-20$ is $(|L_1|.|L_2|) \cong |L_2| \cong$ O$(n)$. In fact, $L_1$ contains edges whose endpoints are cut vertices. $|L_1| \cong$ O$(n)$ if the given RDG is a path graph. In that case, $L_2$ is empty. Both $L_1$ and $L_2$ can not be large simultaneously. 
	\item[iii.]  The computational complexity of the lines  $21-28$ is $(|N(v_i)||N(v_j)|+|L_2|)|L_2|$ $\cong K^2 |L_2|^2\cong$ O$(n^2)$.
	\item[iv.]  The computational complexity of the lines  $29-35$ is $|E_1|^2 \cong$ O$(n^2)$.
	\end{enumerate}
	 Hence, the computational complexity of Algorithm \ref{algo41} is quadratic.

\begin{remark}
{\rm If $|N(v_i)|$ or $|N(v_j)|$ or $|N(v_i)|\times|N(v_i)|$  is near to $|V|$, then the computational complexity of Algorithm \ref{algo41} becomes O$(n^3)$. However, in design problems such graphs do not appears quite often. Both $|N(v_i)|$ and $|N(v_j)|$ can not be near to $|V|$ in a plane graph.} 
\end{remark}

\section{ Reduction Method} \label{sec5}
In this section, we  show that an edge-reducible biconnected RDG can always be transformed to an edge-irreducible biconnected RDG. Then we present  Algorithm \ref{algo51} that computes the number of CIPs  in a given RDG which is a further input requirement for  Algorithm \ref{algo52} as a call function. Algorithm \ref{algo52} transforms an  edge- reducible biconnected RDG to an  edge-irreducible biconnected RDG.

\begin{theorem} \label{thm51}
	{\rm If an  RDG $\mathcal{G}=(V,E)$ is edge-reducible to an RDG $\mathcal{G}'=(V,E)$ such that $E=E' \cup \{(v_i,v_j)\}$, then $(v_i,v_j)$ is an exterior edge of $\mathcal{G}$.}	
\end{theorem}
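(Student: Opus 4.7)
The plan is to argue by contradiction, using Lemma \ref{lem43} together with the standing assumption (enforced throughout the paper) that every RDG has triangular interior faces.

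Assume for contradiction that $(v_i,v_j)$ is an interior edge of $\mathcal{G}$. Since every interior face of the RDG $\mathcal{G}$ is a triangle, the two faces of $\mathcal{G}$ incident to $(v_i,v_j)$ are triangles lying on opposite sides of $(v_i,v_j)$, say $v_iv_jv_k$ and $v_iv_jv_l$ with $v_k \neq v_l$. In particular $\{v_k,v_l\} \subseteq N(v_i)\cap N(v_j)$, which is consistent with Lemma \ref{lem43} giving $|N(v_i)\cap N(v_j)|=2$.

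Now form $\mathcal{G}'$ by deleting $(v_i,v_j)$. In the plane embedding of $\mathcal{G}'$ inherited from $\mathcal{G}$, the two triangular faces $v_iv_jv_k$ and $v_iv_jv_l$ merge into a single interior face whose boundary is the 4-cycle $v_i v_k v_j v_l$. This interior face is a quadrilateral, not a triangle, so $\mathcal{G}'$ is not a plane graph with triangular interior regions. Hence $\mathcal{G}'$ cannot be an RDG in the sense fixed in Section \ref{sec2} (where the duality with 3-joint RFPs forces all interior faces to be triangular), contradicting the hypothesis that $\mathcal{G}'$ is an RDG with $E=E'\cup\{(v_i,v_j)\}$.

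The only nontrivial point is justifying that the two faces flanking the interior edge $(v_i,v_j)$ are genuinely distinct triangles on opposite sides; this uses both the triangular-interior-faces assumption and the planarity of the embedding, and is the one step I would state carefully. Everything else is a one-line application of Lemma \ref{lem43} plus a face-merging observation, so I do not anticipate any further obstacle. Since the contradiction rules out $(v_i,v_j)$ being interior, and $\mathcal{G}$ has only interior and exterior edges, $(v_i,v_j)$ must be exterior, which is the claim.
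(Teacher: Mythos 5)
Your proof is correct. It rests on the same basic invariant as the paper's proof --- that an RDG, as fixed in Section \ref{sec2}, has all interior faces triangular, and this must hold for both $\mathcal{G}$ and $\mathcal{G}'$ --- but you exploit it locally rather than globally. The paper compares the lengths of the exterior cycles $C$ of $\mathcal{G}$ and $C'$ of $\mathcal{G}'$: since every interior face has length $3$ in both graphs and the vertex set is unchanged, Euler's formula forces $|C'|=|C|+1$, and the only way deleting a single edge can enlarge the outer cycle is if that edge lay on it. Your argument instead takes the contrapositive at the level of the two faces flanking the edge: if $(v_i,v_j)$ were interior, its two incident faces are distinct interior triangles $v_iv_jv_k$ and $v_iv_jv_l$ with $v_k\neq v_l$, and deleting the edge merges them into a quadrilateral interior face of $\mathcal{G}'$, contradicting the triangulated-interior requirement. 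Your version is arguably cleaner: it avoids the counting step entirely and makes explicit the inference that the paper leaves implicit (namely, why the deleted edge must lie on $C$ --- in the paper's phrasing ``when $(v_i,v_j)$ is removed from $C$\dots'' the conclusion is partly presupposed). The one point you rightly flag, that the two flanking faces are genuinely distinct triangles with distinct third vertices, is easily discharged for a simple biconnected internally triangulated plane graph on more than three vertices, since $v_k=v_l$ would force the $3$-cycle $v_iv_jv_k$ to bound two faces and hence be either the exterior face boundary or a separating triangle. Both arguments share the same tacit convention (also unaddressed by the paper) that $\mathcal{G}'$ carries the embedding inherited from $\mathcal{G}$.
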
 

\begin{proof} 
	Assume that  $C$ and $C'$ be the exterior faces of $\mathcal{G}$ and $\mathcal{G}'$ respectively. Since  both $\mathcal{G}$ and $\mathcal{G}'$ are RDGs,   each interior face of both $\mathcal{G}$ and $\mathcal{G}'$ are of equal length ( i.e., of length 3). But $E' \subsetneq E$ and, $\mathcal{G}$ and $\mathcal{G}'$ have the same number of vertices.  This implies that $C$ and $C'$ have different length, i.e.,   $|C|<|C'|$. Also, when   $(v_i,v_j)$ is removed from $C$, the two other edges of the triangle passing through $(v_i,v_j)$ becomes a part of $C'$, i.e., removing an edge from $C$ increases the size of $C'$ by one.     Thus we see that  $|C'|-|C|= 1$ and $(v_i,v_j)$ belongs to $C$. Hence $(v_i,v_j)$ is an exterior edge of $\mathcal{G}$.  
\end{proof}
 
  Theorem \ref{thm51} suggests us that  an edge-reducible RDG can be transformed to any other RDG by deleting  some of its exterior edges. Then further such resultant RDG can also be transformed to another RDG by deleting  some of its exterior edges.  Such recursion process can be continued until the graph remains an RDG. Following this, now we are going to prove  another main result of this paper.

\begin{theorem} \label{thm52}
{\rm  It is always possible to transform an edge-reducible biconnected RDG  to an edge-irreducible biconnected RDG.}
\end{theorem}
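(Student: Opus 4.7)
The plan is to argue that from any edge-reducible biconnected RDG $\mathcal{G}$ we can always delete a single exterior edge so that the resulting graph is still a biconnected RDG, and then iterate this descent. Since each step strictly decreases the edge count and the graph is finite, the procedure must terminate, and by definition the terminal graph is an edge-irreducible biconnected RDG.

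The core step is to exhibit a safe exterior edge $(v_i,v_j) \in E$ whose deletion preserves all three conditions of Theorem \ref{thm21}. By Theorem \ref{thm51}, any reducible single edge must be exterior, so the search is restricted to the outer cycle. For a candidate $(v_i,v_j)$, the unique interior triangle containing it is $v_iv_jv_k$, and removal replaces the arc $(v_i,v_j)$ on the outer boundary by the path $v_i v_k v_j$. One then checks three things: no new separating triangle is created (automatic, since deleting an edge cannot introduce a new $3$-cycle); the CIP count does not exceed four (any shortcut now incident to the newly exposed vertex $v_k$ already existed as an interior edge of $\mathcal{G}$, so existing CIPs are locally re-routed through $v_k$ rather than multiplied, in the spirit of the case analysis used in Lemma \ref{lem45}); and biconnectivity is preserved.

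The main obstacle lies in the last condition: $v_k$ could in principle become a cut-vertex of $\mathcal{G} - (v_i,v_j)$. To ensure existence of a safe edge, I would invoke edge-reducibility directly: by Definition \ref{def32} there is a witness RDG $\mathcal{G}'=(V,E') \subsetneq \mathcal{G}$, and every edge of $E \setminus E'$ is exterior in $\mathcal{G}$ by Theorem \ref{thm51}. Among these candidate edges I expect to show that at least one can be removed without destroying biconnectivity, by arguing that if every single-edge deletion of such an edge produced a cut-vertex, then no sequence of single-step RDG reductions could transport $\mathcal{G}$ to $\mathcal{G}'$, contradicting the fact that $\mathcal{G}'$ is itself a biconnected RDG reachable from $\mathcal{G}$ by a finite sequence of edge deletions. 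Once such an edge is located, the theorem follows by straightforward induction on $|E|$: delete the edge and recurse on the resulting biconnected RDG, terminating precisely when Definition \ref{def32} no longer applies.
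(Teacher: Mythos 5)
Your overall strategy is the same as the paper's: repeatedly delete a ``safe'' exterior edge (safe by Theorem \ref{thm51}), check the conditions of Theorem \ref{thm21} after each deletion, and terminate by finiteness of the edge set. However, two of your verification steps have genuine gaps.

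First, your claim that the CIP count is merely ``re-routed rather than multiplied'' is false for edge \emph{deletion}. When the exterior edge $(v_i,v_j)$ is removed, the third vertex $v_k$ of the face $v_iv_jv_k$ moves onto the outer cycle, and every interior edge joining $v_k$ to another exterior vertex becomes a \emph{new} shortcut, each of which may spawn a new CIP. The paper's own worked example shows this: deleting $(v_1,v_3)$ from the graph of Fig.~\ref{fig:f7}a turns the interior edge $(v_2,v_4)$ into a shortcut and raises the CIP count from $0$ to $1$. The case analysis of Lemma \ref{lem45} that you invoke goes in the opposite direction (edge addition, where CIPs can only be absorbed or re-routed), so it does not transfer. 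This is precisely why the paper's proof must explicitly \emph{assume} that some exterior edge can be chosen whose deletion leaves at most $4$ CIPs, arguing that otherwise $\mathcal{G}_1$ would be edge-irreducible.

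Second, your biconnectivity argument is circular. Definition \ref{def32} only asserts the existence of \emph{some} RDG $\mathcal{G}'=(V,E')$ with $E'\subsetneq E$; it does not assert that $\mathcal{G}'$ is biconnected (a separable connected RDG is a legitimate witness by Theorem \ref{thm22}), nor that $\mathcal{G}'$ is reachable from $\mathcal{G}$ by a chain of single-edge deletions each of which is itself an RDG. Hence the ``contradiction'' you derive --- that no sequence of single-step RDG reductions transports $\mathcal{G}$ to $\mathcal{G}'$ --- contradicts nothing that has been established. Note also that Theorem \ref{thm51} is stated only for the case $E=E'\cup\{(v_i,v_j)\}$, so it cannot be applied wholesale to every edge of $E\setminus E'$. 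The paper sidesteps all of this with a purely local guard: it only deletes an exterior edge $(v_i,v_j)$ with $|N(v_i)|>2$ and $|N(v_j)|>2$, which is the condition used to sustain biconnectedness of the result. You would need either to adopt such a local criterion or to prove the reachability claim you are implicitly relying on.
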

\begin{proof}
 Let $\mathcal{G}_1=(V, E_1)$ be an edge-reducible biconnected RDG. Let   $Z_1$  be the set of  all exterior edges of $\mathcal{G}_1$. If for each edge $(v_i,v_j)\in Z_1$,  $|N(v_i)|\leq 2$ or $|N(v_j)|\leq 2$, then $\mathcal{G}_1=(V, E_1)$ is an  edge-irreducible biconnected RDG which is  a contradiction. This implies we can select an edge $(v_i,v_j)$ from $Z_1$ such that $|N(v_i)|> 2, |N(v_j)|>2$. And construct a nonseparable graph  $\mathcal{G}_2=(V, E_2)$  with atmost  4 CIPs  where $E_2=E_1-\{(v_i,v_j)\}$. Such a nonseparable graph always exists otherwise  it has more than four CIPs, $\mathcal{G}_1$ becomes an edge-irreducible RDG which contradicts our assumption.  Since $(v_i,v_j)$ is an exterior edge and $\mathcal{G}_1=(V, E_1)$ is  a biconnected RDG, $|N(v_i) \cap N(v_j)|=1$. Suppose that  $(N(v_i) \cap N(v_j))=\{v_t\}$. 

 Now we claim that $\mathcal{G}_2$ is an RDG. Since $\mathcal{G}_1$ is an  RDG, each of its interior regions are triangular. On removing  an exterior edge from an RDG, the remaining interior regions remains triangular.  This implies that each interior region of $\mathcal{G}_2$ is triangular. It is evident that the removal of an exterior edge from $\mathcal{G}_1$ does not produce a separating triangle. This shows that $\mathcal{G}_2$  is independent of separating triangles. Also, by our assumption, $\mathcal{G}_2$  has atmost four CIPs. By Theorem \ref{thm21}, $\mathcal{G}_2$ is a biconnected RDG. 
 \par
 By continuously defining  $\mathcal{G}_k$ $(k \geq 3)$ as above,   a recursive process shows that at the iteration until the above defined conditions remains true, $\mathcal{G}_k$ is an edge-reducible biconnected RDG. Hence the proof.
 
\end{proof}

    \begin{algorithm}[H] 
	\caption{\textbf{NumberOfCIPs($\mathcal{G}=(V,E),W)$}}
    \begin{algorithmic}[1]
    \REQUIRE  A  biconnected  RDG $\mathcal{G}=(V,E)$
    \ENSURE  Number of  CIPs in $\mathcal{G}$
    
    \STATE  $W\leftarrow \phi$, $L\leftarrow \phi$, $U\leftarrow \phi$, $X\leftarrow \phi$
   \FORALL {$(v_i,v_j)\in E$}	
	\STATE $s \leftarrow |N(v_i) \cap N(v_j)|$
	\IF {$s==1$}
	\STATE $L \leftarrow L \cup \{(v_i,v_j)\}$
	\STATE $U \leftarrow U \cup \{v_i,v_j\}$
	\ELSE
	\CONTINUE
	\ENDIF
	\ENDFOR

	\FORALL { $(v_i,v_j) \in (E-L)$ }
	\IF { $v_i, v_j\in U$ }
	\STATE $W \leftarrow W \cup \{(v_i,v_j)\}$
	\STATE $X \leftarrow X \cup \{v_i,v_j\}$
	\ELSE
	\CONTINUE
	\ENDIF 
	\ENDFOR

	\FORALL { $(v_i,v_j) \in W$ }
	\IF { $(v_i, v_{i+1}),(v_{i+1}, v_{i+2}),\dots,(v_{j-1}, v_j) \in L$}
	\IF { $v_k\in X$, $i+1 \leq k \leq j-1$ }
	\STATE $W \leftarrow W - \{(v_i,v_j)\}$
	\ELSIF {$(v_i, v_{i-1}),(v_{i-1}, v_{i-2}),\dots,(v_{j+1}, v_j) \in L$}
	\IF { $v_k\in X$, $i+1 \leq k \leq j-1$ }
	\STATE $W \leftarrow W - \{(v_i,v_j)\}$
	\ENDIF
	\ENDIF
	\ELSE
	\CONTINUE
	\ENDIF 
	\ENDFOR

	\RETURN $W$
	\end{algorithmic} \label{algo51}
\end{algorithm}

In the most design problems,  graphs structures of floorplans  are biconnected. Therefore abiding by common design practice, we have described Algorithm \ref{algo52} for transforming a biconnected RDG to another biconnected RDG.

\begin{algorithm}[H] 
\caption{\textbf{Restoring an edge-reducible biconnected RDG to an edge-irreducible biconnected RDG}}
\begin{algorithmic}[1]
\REQUIRE  A  biconnected RDG $\mathcal{G}=(V,E)$
\ENSURE  An edge-irreducible biconnected RDG $\mathcal{G}'=(V,E')$
\STATE $Z \leftarrow \phi$

   \FORALL {$(v_i,v_j)\in E$}
   \STATE $s \leftarrow |N(v_i) \cap N(v_j)|$
	\IF {$s==1$}
	\STATE $Z \leftarrow Z \cup \{(v_i,v_j)\}$
	\ELSE
	\CONTINUE
	\ENDIF
	\ENDFOR

   \FORALL {$(v_i,v_j)\in Z$} 
   \IF { $|N(v_i)|> 2 \wedge |N(v_j)|>2 \wedge (N(v_i) \cap N(v_j))=\{v_t\}$}
  \STATE NumberOfCIPs$(\mathcal{G}=(V, E-\{(v_i,v_j)\}),W)$	
   \IF {$|W|\leq 4$}
   \STATE $E \leftarrow E - \{(v_i,v_j)\}$
    \STATE {$Z\leftarrow Z \cup \{(v_i,v_t), (v_t,v_j)\}- \{(v_i,v_j)$}
    \ELSE
    \PRINT $\mathcal{G}$ is an edge-irreducible biconnected RDG. 
    \ENDIF
    \ENDIF
   \ENDFOR
   
   \RETURN $\mathcal{G}$
   \end{algorithmic} \label{algo52}
   \end{algorithm}

For a better understanding of Algorithm \ref{algo52},  we  illustrate its steps through an example.  Consider a biconnected  RDG $\mathcal{G}_1=(V,E_1)$ shown in Fig. \ref{fig:f7}a. First of all, the first loop (the lines $3-9$) of Algorithm \ref{algo52} computes a set $Z$  = $\{ (v_1,v_3)$, $(v_1,v_7),$ $(v_5,v_7),$ $(v_3,v_5)\}$. Now Algorithm \ref{algo52} executes the steps of second loop  (the lines $10-20$) as follows:

 Suppose that the second loop picks  $(v_3,v_5)$ from $Z$ randomly. Then  $11^\text{th}$ line is executed since $N(v_3)=4>2$, $N(v_5)=4>2$ and $N(v_3)\cap N(v_5)=\{v_4\}$. Next it  executes $12^\text{th}$ line to determine a set $W$  of CIPs in $\mathcal{G}_2=(V,E_2)$ where $E_2=E_1-\{(v_3,v_5)\}$. Using Algorithm \ref{algo51},  $|W|=0 \leq 4$. Then $E_1=E_1 -\{(v_3,v_5)\}$ and it   adds both $(v_3,v_4)$ and $(v_4,v_5)$ to $Z$ and remove $(v_3,v_5)$ from $Z$. Thus $Z=\{(v_3,v_4),$ $(v_4,v_5),$  $(v_1,v_3)$, $(v_1,v_7),$ $(v_5,v_7) \}$.
    
 Next suppose that it picks  $(v_1,v_3)$ from $Z$.  Then  $11^\text{th}$ line is executed since $N(v_1)=5>2$, $N(v_3)=4>2$ and $N(v_1)\cap N(v_3)=\{v_2\}$. Next it  executes $12^\text{th}$ line to determine  the set $W$  for $\mathcal{G}_3=(V, E_3)$ where $E_3=E_2-\{(v_1,v_3)\}$. Using Algorithm \ref{algo51}, we have  $W=\{(v_2,v_4)\}$, i.e., $|W|=1 \leq 4$. Therefore it subtracts $(v_1,v_3)$  from both $E_1$ and $Z$, and adds both $(v_1,v_2)$ and $(v_2,v_3)$ to $Z$.  Thus $Z=\{(v_1,v_2),$ $(v_2,v_3),$ $(v_3,v_4),$ $(v_4,v_5),$ $(v_1,v_7),$ $(v_5,v_7)\}$ and $E_1 =  E_1 - \{(v_1,v_3),(v_3,v_5)\}$.

In this way,  the second loop continues until $|W|\leq 4$ with the condition in $11^{\text th}$.  Finally we obtain an edge-irreducible biconnected  RDG shown in Fig. \ref{fig:f7}b with the edge set  $E - \{(v_7,v_9),$ $(v_1,v_9),$ $(v_1,v_7),$ $(v_7,v_5), $ $(v_1,v_3),$ $ (v_3,v_5)\}$ and  $Z$ (set of the exterior edges) becomes $\{(v_7,v_8),$  $(v_8,v_9),$ $(v_1,v_{10}),$  $(v_{10},v_9),$ $(v_5,v_6),$  $(v_6,v_7),$ $(v_1,v_2),$ $ (v_2,v_3),$ $(v_3,v_4),$ $(v_4,v_5) \}$.

 \begin{figure}[H]
	\centering
	\includegraphics[width=0.95\linewidth]{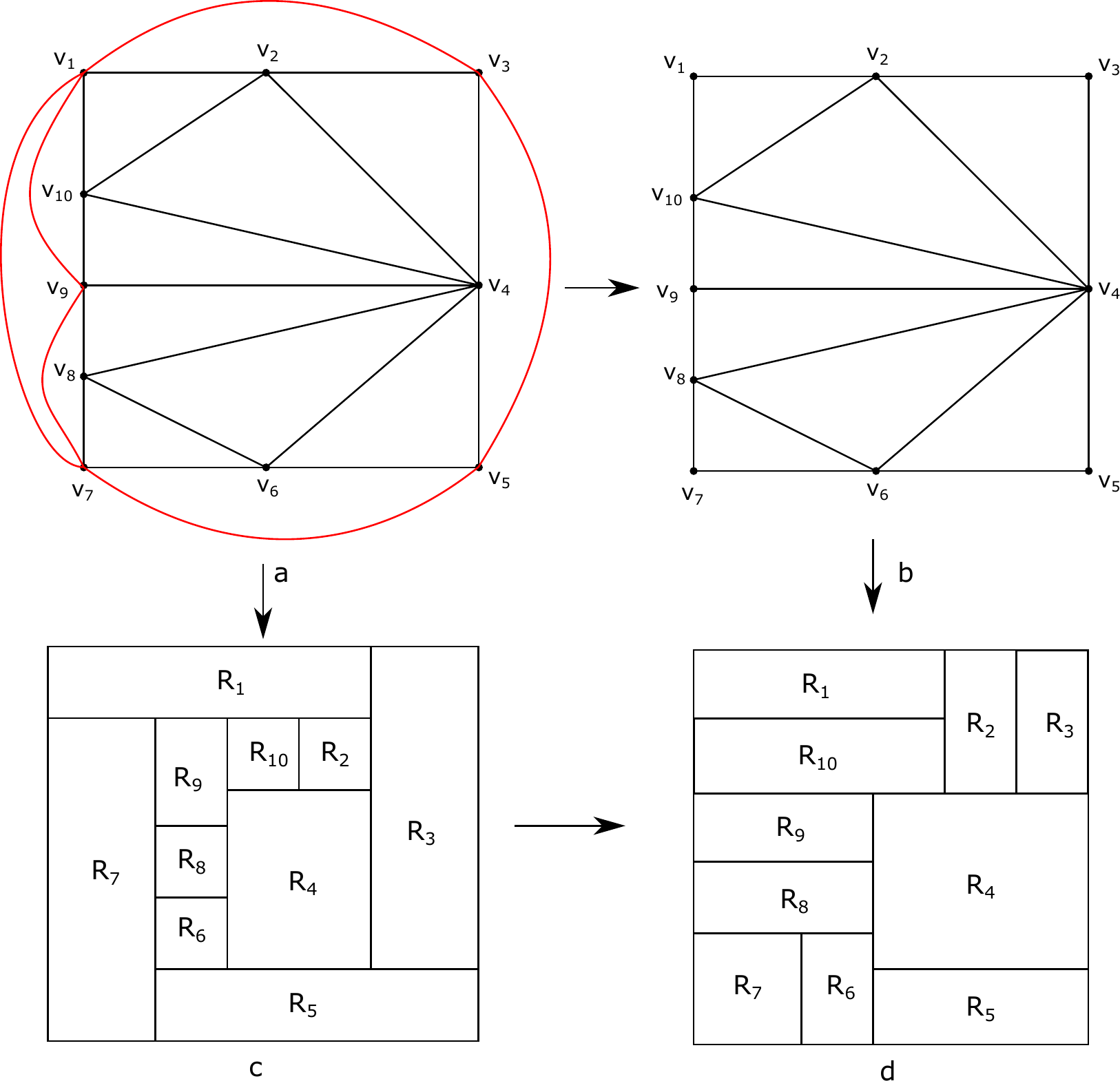}
	\caption{\rm (a) Transformation of an edge-reducible RDG (MRDG) to (b) an edge-irreducible RDG. The respective RFPs are shown in (c) and (d).}
	\label{fig:f7}
\end{figure}
It can be noted that the edge-irreducible biconnected RDG  in Fig. \ref{fig:f7}b can not be further transformed to an edge-irreducible separable connected RDG. In fact, the removal of  a single edge from it violates RDG property (refer to Theorems \ref{thm21} and \ref{thm22}). From this, it follows that a biconnected edge-irreducible RDG may not necessarily be transformed to a separable connected RDG. It may be sometimes possible  to restore it  to an edge-reducible separable connected RDG by relaxing the conditions $N(v_i)>2$ and $N(v_j)>2$ ($11{^\textbf{th}}$ line)\footnote{This condition sustains biconnectedness of the output RDG.}.
Once if it is transformed to  an edge-reducible separable connected RDG, Algorithm \ref{algo52} can be made applicable for separable connected RDGs with a slight modifications  as follows.  

 Consider a separable connected RDG  and construct  its BNG. Theorem \ref{thm22} tells us that  it is a path.  Consider each component corresponding to the vertices of the BNG one by one as an input to Algorithm \ref{algo52}.  To proceed, first consider the components corresponding to the initial and end vertices of the BNG as an input to  Algorithm \ref{algo52}  and follow Theorem \ref{thm22}, i.e., restrict the line 13 of Algorithm \ref{algo52}  by $|W|\leq 2$ such that no CIP should pass through a cut vertex.\footnote{Such CIPs are called critical CIPs.} and restore it to   an edge-irreducible one, it is possible using Algorithm \ref{algo52} since the corresponding block is biconnected.  Then,  the remaining blocks must be  restricted to $|W|=0$ in the line 13 of Algorithm \ref{algo52}. In this way, each biconnected component can be restored to  an edge-irreducible one. Finally  gluing these  edge-irreducible components in the same order as we decomposed them, we obtain an 
 edge-irreducible separable connected RDG from   the  edge-reducible separable connected RDG. 
\par
 The ability of Algorithm \ref{algo52}  in the design process is visible  as follows.  

Algorithm \ref{algo52} gives  an edge-irreducible biconnected  RDG as an output for an input biconnected RDG. But it may not be always preferred/required. Of course, one can obtain output as the  edge-reducible RDG by imposing some restrictions on $Z$ or  $P_c$. Suppose that one desires that a particular set $X$ of adjacency relations must not be removed from the given RDG. Then $Z$ or $P_c$ (in the lines 10 and 13 respectively) needs to be replaced by $Z-X$ or  $P_c-X$. This makes   Algorithm \ref{algo52} more practical to design problems. Consequent to this,  a particular  set of  adjacency relations of component rectangles of an existing RFP  can be sustained by removing  the set $X$ of adjacency relations (edges) of the corresponding vertices in its dual graph as discussed above. Further, Algorithm \ref{algo52} can give also output as  a separable connected RDG for the input biconnected RDG.

 \subsection*{\rm Analysis of computational complexity}

\begin{itemize}

\item The computational complexity of Algorithm \ref{algo51} is linear

The computational complexity of the lines $2-10$ is $|N(v_s)||N(v_t)||E|=K_1K_2|E| \cong$ O$(n)$. The computational complexity of the lines $11-18$ is $|U||E-L| \cong$ O$(n)$. The computational complexity of the lines $19-31$ is $|W||L||X|^2\cong$ O$(n)$. Hence the computational complexity of Algorithm \ref{algo51} is linear.

 \item The computational complexity of  Algorithm \ref{algo52} is O$(n^2)$.

The computational complexity of the lines $3-9$ is  $|N(v_s)||N(v_t)||E|=K_1K_2|E| \cong$ O$(n)$.   The computational complexity of the lines $10-20$ is the product of $|N(v_i)||N(v_j)||Z||P_c||A|$ and the computational complexity of Algorithm \ref{algo51}. But $|N(v_i)||N(v_j)||Z||P_c||A||\cong$ O$(n^2)$. Hence the computational complexity of Algorithm \ref{algo52} is quadratic.
\end{itemize}

\begin{remark}
{\rm If for some graphs, $|N(v_s)||N(v_t)|$ or $|N(v_s)|$  is near to $|V|$, then the computational complexity of both Algorithm \ref{algo51} and Algorithm \ref{algo52} is cubic. Both $|N(v_s)|$ and $|N(v_s)|$ can not  be near to $|V|$ in a plane graph simultaneously. However, in design problem such graphs do not appear quite often.}
\end{remark}

\section{Concluding remarks and future task} \label{sec6}

In this paper, we studied adjacency transformations between RDGs.   We showed  how to transform an RDG into another RDG of which the edge set is a superset or a subset of the first one in quadratic time (the worst case is cubic).

We proved that it is always possible to construct an MRDG from a given RDG, where MRDG represents an RDG with maximum adjacency among given modules.  Then we  presented an algorithm (Algorithm \ref{algo41}) for its construction from the given RDG.  Since adding new edges to an RDG without disturbing RDG property reduce distances among its vertices (usually it is measured by the shortest path between vertices) and hence it is useful in reducing wire-length interconnections among the modules of VLSI floorplans. This method adds new edges to an RDG in bulk if  it is a path graph ( minimal one that is an RDG). In other words, if we pick a Hamiltonian path of an RDG, then a new desired form of the RDG can constructed by adding edges in bulk.  If it is not possible to make  some pair of vertices of a given  RDG adjacent  in its MRDG without disturbing RDG property, then it would be interesting to find a method that can minimize distance between these vertices. In this case, it is equivalent to finding a minimal spanning tree for routability of interconnections.
\par
We also showed  that an edge-reducible RDG  can be restored to a minimal one (an edge-irreducible RDG)  and presented an algorithm (Algorithm \ref{algo52}) to restore the first one to the minimal one.  The removal of an edge from a reducible RDG takes an interior vertex to the exterior. Thus it can be very useful for enhancing the input-output connections between VLSI circuit and the outside world. It would be interesting to derive a necessary and sufficient condition for a given RDG to admit an edge-irreducible RDG.
\par
Consequent to these algorithms, we can construct an efficient RDG by deleting or adding edges to a given RDG.

 \bibliographystyle{elsarticle-num} 
 \bibliography{references}

\begin{thebibliography}{10}
\expandafter\ifx\csname url\endcsname\relax
  \def\url#1{\texttt{#1}}\fi
\expandafter\ifx\csname urlprefix\endcsname\relax\def\urlprefix{URL }\fi
\expandafter\ifx\csname href\endcsname\relax
  \def\href#1#2{#2} \def\path#1{#1}\fi

\bibitem{Sham07}
C.-W. Sham, E.~F. Young, Area reduction by deadspace utilization on
  interconnect optimized floorplan, ACM Transactions on Design Automation of
  Electronic Systems (TODAES) 12~(1) (2007) 1--11.

\bibitem{rinsma1988existence}
I.~Rinsma, Existence theorems for floorplans, Bulletin of the Australian
  Mathematical Society 37~(3) (1988) 473--475.

\bibitem{Kozminski85}
K.~Ko{\'z}mi{\'n}ski, E.~Kinnen, Rectangular duals of planar graphs, Networks
  15~(2) (1985) 145--157.

\bibitem{Bhasker87}
J.~Bhasker, S.~Sahni, A linear time algorithm to check for the existence of a
  rectangular dual of a planar triangulated graph, Networks 17~(3) (1987)
  307--317.

\bibitem{Lai90}
Y.-T. Lai, S.~M. Leinwand, A theory of rectangular dual graphs, Algorithmica
  5~(1-4) (1990) 467--483.

\bibitem{Bhasker88}
J.~Bhasker, S.~Sahni, A linear algorithm to find a rectangular dual of a planar
  triangulated graph, Algorithmica (1988) 247--278.

\bibitem{He93}
X.~He, On finding the rectangular duals of planar triangular graphs, SIAM
  Journal on Computing 22~(6) (1993) 1218--1226.

\bibitem{Kant97}
G.~Kant, X.~He, Regular edge labeling of 4-connected plane graphs and its
  applications in graph drawing problems, Theoretical Computer Science
  172~(1-2) (1997) 175--193.

\bibitem{Eppstein12}
D.~Eppstein, E.~Mumford, B.~Speckmann, K.~Verbeek, Area-universal and
  constrained rectangular layouts, SIAM Journal on Computing 41~(3) (2012)
  537--564.

\bibitem{Lai88}
Y.-T. Lai, S.~M. Leinwand, Algorithms for floorplan design via rectangular
  dualization, IEEE transactions on computer-aided design of integrated
  circuits and systems 7~(12) (1988) 1278--1289.

\bibitem{Kozminski88}
K.~A. Kozminski, E.~Kinnen, Rectangular dualization and rectangular
  dissections, IEEE Transactions on Circuits and Systems 35~(11) (1988)
  1401--1416.

\bibitem{Tang90}
H.~Tang, W.-K. Chen, Generation of rectangular duals of a planar triangulated
  graph by elementary transformations, in: IEEE International Symposium on
  Circuits and Systems, IEEE, 1990, pp. 2857--2860.

\bibitem{wang2018customization}
X.-Y. Wang, Y.~Yang, K.~Zhang, Customization and generation of floor plans
  based on graph transformations, Automation in Construction 94 (2018)
  405--416.

\bibitem{Yao03}
B.~Yao, H.~Chen, C.-K. Cheng, R.~Graham, Floorplan representations: Complexity
  and connections, ACM Transactions on Design Automation of Electronic Systems
  (TODAES) 8~(1) (2003) 55--80.

\bibitem{Ackerman06}
E.~Ackerman, G.~Barequet, R.~Y. Pinter, A bijection between permutations and
  floorplans, and its applications, Discrete Applied Mathematics 154~(12)
  (2006) 1674--1684.

\bibitem{Reading12}
N.~Reading, Generic rectangulations, European Journal of Combinatorics 33~(4)
  (2012) 610--623.

\bibitem{Dawei14}
B.~D. He, A simple optimal binary representation of mosaic floorplans and
  baxter permutations, Theoretical Computer Science 532 (2014) 40--50.

\bibitem{Yamanaka17}
K.~Yamanaka, M.~S. Rahman, S.-I. Nakano, Floorplans with columns, in:
  International Conference on Combinatorial Optimization and Applications,
  Springer, 2017, pp. 33--40.

\bibitem{Shekhawat18}
K.~Shekhawat, Enumerating generic rectangular floor plans, Automation in
  Construction 92 (2018) 151--165.

\bibitem{chang2015constrained}
Y.-J. Chang, H.-C. Yen, Constrained floorplans in 2d and 3d, Theoretical
  Computer Science 607 (2015) 320--336.

\bibitem{Alam13}
M.~J. Alam, T.~Biedl, S.~Felsner, M.~Kaufmann, S.~G. Kobourov, T.~Ueckerdt,
  Computing cartograms with optimal complexity, Discrete \& Computational
  Geometry 50~(3) (2013) 784--810.

\bibitem{Yeap93}
K.-H. Yeap, M.~Sarrafzadeh, Floor-planning by graph dualization: 2-concave
  rectilinear modules, SIAM Journal on Computing 22~(3) (1993) 500--526.

\bibitem{He99}
X.~He, On floor-plan of plane graphs, SIAM Journal on Computing 28~(6) (1999)
  2150--2167.

\bibitem{de2009rectilinear}
M.~De~Berg, E.~Mumford, B.~Speckmann, On rectilinear duals for vertex-weighted
  plane graphs, Discrete Mathematics 309~(7) (2009) 1794--1812.

\end{thebibliography}
\end{document}